\pgfplotsset{compat=newest} 
\numberwithin{equation}{section}
\newtheorem{theorem}{Theorem}[section]
\theoremstyle{remark} 
\journal{***}
\begin{document}
	\begin{frontmatter}
		\title{A cytokine-enhanced viral infection model with CTL immune response, distributed delay and saturation incidence}
		 \author{Xiaodong Cao} 
		\ead{xiaodongcao@cornell.edu}
		\address{Department of Mathematics, Cornell University, Ithaca, NY 14853}
  \author{Songbo Hou \corref{cor1}}
		\ead{housb@cau.edu.cn}
		\address{Department of Applied Mathematics, College of Science, China Agricultural University,  Beijing, 100083, P.R. China}
		\author{Xiaoqing Kong}
		\ead{kxq@cau.edu.cn}
		\address{Department of Applied Mathematics, College of Science, China Agricultural University,  Beijing, 100083, P.R. China}
		
		\cortext[cor1]{Corresponding author: Songbo Hou}
		\begin{abstract}
			In this paper, we propose a delayed cytokine-enhanced viral infection model incorporating saturation incidence and immune response. We compute the basic reproduction numbers and  introduce a convex cone to discuss the impact of non-negative initial data on solutions. By defining appropriate Lyapunov functionals and employing LaSalle's invariance principle, we investigate the stability of three equilibria: the disease-free equilibrium, the immunity-inactivated equilibrium, and the immunity-activated equilibrium. We establish conditions under which these equilibria are globally asymptotically stable. Numerical analyses not only corroborate the theoretical results but also reveal that intervention in virus infection can be achieved by extending the delay period.
			
		\end{abstract}	
		\begin{keyword} distributed delay \sep  saturation incidence \sep  CTL immune response\sep  convex cone\sep global stability
			\MSC [2020] 60H10, 92D30
		\end{keyword}
	\end{frontmatter}
	
	\section{Introduction}

	HIV is widely recognized as the causative agent of AIDS, a severe infectious disease. Since the identification of AIDS, the rapid spread of AIDS has positioned it as a principal infectious disease posing a significant threat to global health. Beyond its health implications, AIDS also engenders a spectrum of moral and ethical dilemmas. Consequently, investigating the pathogenesis, transmission dynamics, and strategies for the prevention and control of AIDS has emerged as a critical and pressing endeavor.

	Research indicates that HIV spreads within target cells through two primary mechanisms: virus-to-cell and cell-to-cell transmissions \cite{kimata1999emerging}. In recent years, mathematical modeling has become a pivotal tool for examining the dynamics of disease transmission between hosts and specifically, the intricacies of HIV infection within a host. An increasing number of scholars have focused their research on the latter—developing viral infection kinetic models that elucidate the interactions between CD4$^{+}$T cells and HIV. The foundational models for HIV-1, which outline the mechanics of viral infection disease, were introduced by \cite{nowak1997anti,nowak1996hiv}. These models establish the relationships among CD4$^{+}$T cells, infected CD4$^{+}$T cells, and the virus. Building upon this framework, \cite{wang2012global} and \cite{murase2005stability} each proposed a viral infection kinetic model that incorporates humoral immunity. A common assumption in these models is that all biological processes triggered by the virus's entry into the body occur instantaneously—an impractical supposition given the inherent intracellular time delays. To address this, numerous scholars have integrated time delays into the model, aiming to accurately reflect the impact of these delays on cell infection within the host \cite{herz1996viral,wang2007complex,shi2010dynamical,hattaf2013stability,chen2023complex,jiang2022global,lin2017threshold,guo2020analysis}.

	Recent studies into the mechanisms of CD4$^{+}$T cell death \cite{doitsh2014cell,wang2018caspase} have revealed that the secretion of inflammatory factors from dying cells attracts a significant influx of uninfected cells to the site of inflammation. This process leads to increased cell infection and death. However, earlier models \cite{herz1996viral,wang2007complex,shi2010dynamical,hattaf2013stability,chen2023complex,jiang2022global,lin2017threshold,guo2020analysis} did not consider the role of inflammatory factors. In response, Zhang et al. \cite{zhang2023dynamic} incorporated inflammatory factors into their model, which is presented as follows:

	\begin{equation}\label{eq1}
		\left\{
		\begin{aligned}
			\frac{dx(t)}{dt}=&\lambda-\beta_{1}x(t)v(t)-\beta_{2}x(t)c(t)-d_{1}x(t),\\
			\frac{dy(t)}{dt}=&e^{-m\tau_{1}}\beta_{1}x(t-\tau_{1})v(t-\tau_{1})+e^{-m\tau_{1}}\beta_{2}x(t-\tau_{1})c(t-\tau_{1})-(\alpha_{1}+d_{2})y(t)\\
			&-py(t)z(t),\\
			\frac{dc(t)}{dt}=&\alpha_{2}y(t)-d_{3}c(t),\\
			\frac{dv(t)}{dt}=&ke^{-n\tau_{2}}y(t-\tau_{2})-d_{4}v(t),\\
			\frac{dz(t)}{dt}=&cy(t-\tau_{3})z(t-\tau_{3})-d_{5}z(t).\\
		\end{aligned}
		\right.
	\end{equation}
	The meanings of the system's variables and parameters are referenced in the table below:
	
	{\scriptsize\begin{longtable}{|c|c|c|}
			\caption{Description of Dynamic Variables and Parameters} \label{tab:longtable} \\
			\hline \textbf{Symbol} & \textbf{Description} & \textbf{Type} \\ \hline 
			\endfirsthead
			
			\multicolumn{3}{c}%
			{{\bfseries \tablename\ \thetable{} -- continued from previous page}} \\
			\hline \textbf{Symbol} & \textbf{Description} & \textbf{Type} \\ \hline 
			\endhead
			
			\hline \multicolumn{3}{|r|}{{Continued on next page}} \\ \hline
			\endfoot
			
			\hline \hline
			\endlastfoot
			\small
			$x(t)$ & Concentration of uninfected CD4$^{+}$T cells & Dynamic Variable \\ \hline
			$y(t)$ & Concentration of infected CD4$^{+}$T cells & Dynamic Variable \\ \hline
			$c(t)$ & Concentration of inflammatory cytokines & Dynamic Variable \\ \hline
			$v(t)$ & Concentration of free viruses & Dynamic Variable \\ \hline
			$z(t)$ & Concentration of CTL immune response cells & Dynamic Variable \\ \hline
			$\lambda$ & Proliferation rate of uninfected CD4$^{+}$T cells & Parameter \\ \hline
			$\alpha_1$ & Mortality rate of infected CD4$^{+}$T cells due to pyroptosis & Parameter \\ \hline
			$\alpha_2$ & Proliferation rate of inflammatory cytokines & Parameter \\ \hline
			$\beta_1$ & Rate of CD4$^{+}$T cells infection by viruses & Parameter \\ \hline
			$\beta_2$ & Rate of CD4$^{+}$T cells infection by inflammatory cytokines & Parameter \\ \hline
			$k$ & Proliferation rate of viruses & Parameter \\ \hline
			$c$ & Proliferation rate of CTL immune cells & Parameter \\ \hline
			$p$ & Rate at which CTL immune cells kill infected CD4$^{+}$T cells & Parameter \\ \hline
			$\tau_1$ & Duration from virus entering the cell to production of new virions & Parameter \\ \hline
			$\tau_2$ & Time for a virus to replicate and produce a new virus & Parameter \\ \hline
			$\tau_3$ & Time from antigenic stimulation to the production of CTL immune cells & Parameter \\ \hline
			$d_1$ & Natural mortality rate of uninfected CD4$^{+}$T cells & Parameter \\ \hline
			$d_2$ & Natural mortality rate of infected CD4$^{+}$T cells & Parameter \\ \hline
			$d_3$ & Natural mortality rate of inflammatory cytokines & Parameter \\ \hline
			$d_4$ & Natural mortality rate of free viruses & Parameter \\ \hline
			$d_5$ & Natural mortality rate of CTL immune responsive cells & Parameter \\ \hline
	\end{longtable}}

	We observe that system (\ref{eq1}) assumes that the CTL immune response can be activated at a bilinear rate, which is not precise. To develop a more biologically meaningful mathematical model, many researchers have suggested replacing the bilinear incidence with a nonlinear rate. A saturated immune response function, $c \frac{y(t)z(t)}{h+z(t)}$, is employed in \cite{jiang2014complete,ren2021global} instead of the simpler $cy(t)z(t)$, where $h$ represents a saturation constant. Concurrently, distributed delays have been incorporated into the models by \cite{mittler1998influence,wang2016threshold,nelson2002mathematical,wang2023hiv,Su2015GlobalAO,Liang2014GlobalAO,YANG2015183}.

	Based on the preceding discussion, in this paper, we extend system (\ref{eq1}) by incorporating distributed delays \cite{ YANG2015183,wang2023hiv} and a saturated infection rate \cite{jiang2014complete}. The model under consideration is presented as follows:
	
	\begin{equation}\label{eq2}
		\left\{
		\begin{aligned}
			\frac{dx(t)}{dt}=&\lambda-\beta_{1}x(t)v(t)-\beta_{2}x(t)c(t)-d_{1}x(t),\\
			\frac{dy(t)}{dt}=&\int_{0}^{\infty}f_{1}(s)e^{-m_{1}s}\beta_{1}x(t-s)v(t-s)ds+\int_{0}^{\infty}f_{1}(s)e^{-m_{1}s}\beta_{2}x(t-s)c(t-s)ds\\
			&-(\alpha_{1}+d_{2})y(t)-py(t)z(t),\\
			\frac{dc(t)}{dt}=&\alpha_{2}y(t)-d_{3}c(t),\\
			\frac{dv(t)}{dt}=&k\int_{0}^{\infty}f_{2}(s)e^{-m_{2}s}y(t-s)ds-d_{4}v(t),\\
			\frac{dz(t)}{dt}=&c\frac{y(t)z(t)}{h+z(t)}-d_{5}z(t),\\
		\end{aligned}
		\right.
	\end{equation}
	where $f_1(s)$ and $f_2(s)$ represent probability distributions, and $s$ is defined as a random variable;  other variables and parameters retain the meanings assigned to them in Table 1.   We postulate that if a virus or an infected cell makes contact with an uninfected CD4$^{+}$T cell at time $t-s$, the cell will become infected by time $t$. The expression $e^{-m_1 s}$ quantifies the survival rate of the cell throughout this delay period. Additionally, once infected at time $t-s$, the cell starts to produce new infectious viruses by time $t$, with $e^{-m_2 s}$ indicating the survival rate of the infected cell during this intervening period.

	The probability distribution function $f_{i}(s)$ is referred to as the delay kernel, and it satisfies the following properties:
	\begin{equation}
		f_{i}(s) \geq 0, \quad \int_{0}^{\infty} f_{i}(s) \, ds = 1, \quad i=1,2.
	\end{equation}

	Define the Banach space of fading memory type as follows:
	\[
	C_\alpha = \left\{ \phi \in C((-\infty, 0], \mathbb{R}) : \phi(\theta) e^{\alpha \theta} \text{ is uniformly continuous on } (-\infty, 0] \text{ and } \|\phi\| < \infty \right\},
	\]
	where \(0<\alpha < \frac{\min\{m_1, \,m_2\}}{2}\),  and the norm \(\|\phi\|\) is defined by:
	\[
	\|\phi\| = \sup_{\theta \leq 0} \left| \phi(\theta) e^{\alpha \theta} \right|.
	\]
	Additionally, define the positive subset of this space as:
	\[
	C_\alpha^+ = \left\{ \phi \in C_\alpha : \phi(\theta) \geq 0 \text{ for all } \theta \in (-\infty, 0] \right\}.
	\]

	We assume that the initial conditions for system (\ref{eq2}) are defined as:
	\begin{equation}\label{inc}
		\begin{aligned}
			&x(\theta) = \varphi_1(\theta), \quad y(\theta) = \varphi_2(\theta), \quad c(\theta) = \varphi_3(\theta), \quad v(\theta) = \varphi_4(\theta), \quad z(\theta) = \varphi_5(\theta), \\
			&\varphi_{i}\in C_\alpha^{+},  
			\quad i = 1,2,3,4,5, \quad \theta \in (-\infty, 0].
		\end{aligned}
	\end{equation}
	Based on the fundamental principles of functional differential equations outlined in reference \cite{hale2013introduction,MR1218880}, it can be demonstrated that model (\ref{eq2}) possesses a unique solution which conforms to the initial condition (\ref{inc}).

	The paper is organized as follows. Section 2 involves calculating the basic reproduction numbers and equilibrium points. Concurrently, we introduce a convex cone to prove the preservation of non-negativity of solutions with non-negative initial conditions, addressing a natural question. We also demonstrate the boundness of solutions under the same conditions.  In Section 3, by defining Lyapunov functions and employing LaSalle's invariance principle, we analyze the global stability of three equilibria: the disease-free equilibrium, the immunity-inactivated equilibrium, and the immunity-activated equilibrium. Section 4 deals the substitution of the probability distribution function with the Dirac delta function to simplify the model, with verification of the results through numerical simulations. The paper concludes in the final section.

	\section{Preliminary results of solution}
	In this section, we will calculate the reproduction numbers and subsequently discuss the positivity and boundedness of system (\ref{eq2}).
	
	Clearly, system (\ref{eq2}) has a disease-free equilibrium $E_0$, where
	\begin{equation}
		E_0 = \left(x_0, 0, 0, 0, 0\right) = \left(\frac{\lambda}{d_1}, 0, 0, 0, 0\right).
	\end{equation}
	
	Let $A_1 = \int_0^\infty f_1(s) e^{-m_1 s} \, ds$ and $A_2 = \int_0^\infty f_2(s) e^{-m_2 s} \, ds$. Following the methodology of \cite{van2002reproduction,zhang2023dynamic}, we define the basic reproduction number $\mathcal{R}_0$ as follows:
	\begin{equation}
		\mathcal{R}_0 = \frac{\beta_1 A_1 k A_2 d_3 x_0 + \beta_2 A_1 \alpha_2 x_0 d_4}{d_3 d_4 (\alpha_1 + d_2)}.
	\end{equation}
	
	It is straightforward to demonstrate that if $\mathcal{R}_0 > 1$, system (\ref{eq2}) exhibits an immunity-inactivated equilibrium $E_1(x_1, y_1, c_1, v_1, 0)$, where
	\begin{equation}
		x_1 = \frac{x_0}{\mathcal{R}_0}, \quad y_1 = \frac{d_1 d_3 d_4 (\mathcal{R}_0 - 1)}{\beta_1 k A_2 d_3 + \beta_2 \alpha_2 d_4}, \quad c_1 = \frac{\alpha_2}{d_3} y_1, \quad v_1 = \frac{k A_2}{d_4} y_1.
	\end{equation}
	
	We can obtain the CTL immune reproduction number $\mathcal{R}_1$ using a similar approach as for $\mathcal{R}_0$ in \cite{zhang2023dynamic}, where
	\begin{equation}
		\mathcal{R}_1 = \frac{c d_1 d_3 d_4 (\mathcal{R}_0 - 1)}{h d_5 (\beta_1 k A_2 d_3 + \beta_2 \alpha_2 d_4)}.
	\end{equation}
	Define
	\begin{equation}
		\begin{aligned}
			\Delta &= [d_5 (\alpha_1 + d_2 + ph) (\beta_1 k A_2 d_3 + \beta_2 \alpha_2 d_4) + d_1 d_3 d_4 pc]^2 \\
			&\quad + 4p (\beta_1 k A_2 d_3 + \beta_2 \alpha_2 d_4)^2 h (\alpha_1 + d_2) d_5^2 (\mathcal{R}_1 - 1).
		\end{aligned}
	\end{equation}
	If $\mathcal{R}_1 > 1$, there exists an immunity-activated equilibrium $E_2(x_2, y_2, c_2, v_2, z_2)$, where
	\begin{equation}
		\begin{aligned}
			x_{2}&=\frac{(\alpha_{1}+d_{2}+pz_{2})d_{3}d_{4}}{A_{1}(\beta_{1}kA_{2}d_{3}+\beta_{2}\alpha_{2}d_{4})},\enspace y_{2}=\frac{d_{5}(h+z_{2})}{c},\enspace c_{2}=\frac{\alpha_{2}d_{5}(h+z_{2})}{cd_{3}},\enspace v_{2}=\frac{kA_{2}d_{5}(h+z_{2})}{c d_{4}}, \\
			z_{2}&=\frac{-[d_{5}(\alpha_{1}+d_{2}+ph)(\beta_{1}kA_{2}d_{3}+\beta_{2}\alpha_{2}d_{4})+d_{1}d_{3}d_{4}pc]+\sqrt{\Delta}}{2pd_{5}(\beta_{1}kA_{2}d_{3}+\beta_{2}\alpha_{2}d_{4})}.
		\end{aligned}
	\end{equation}

	Using the same method as \cite{yang2023global}, we can prove the following theorem.
	\begin{theorem}\label{tho1}
		All solutions of system(\ref{eq2}) with positive initial conditions always stay positive.
	\end{theorem}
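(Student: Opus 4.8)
The plan is to argue by contradiction through the first time at which positivity could be lost, exploiting the variation-of-constants (integrating-factor) representation of each equation of (\ref{eq2}). I assume the initial data satisfy $\varphi_i(0) > 0$ for $i = 1,\ldots,5$, in addition to $\varphi_i \ge 0$ on $(-\infty,0]$ as in (\ref{inc}). By continuity of the unique solution guaranteed above and by $\varphi_i(0)>0$, there is a maximal $t_1 \in (0,\infty]$ such that $x,y,c,v,z$ all remain strictly positive on $[0,t_1)$; the goal is to show $t_1 = \infty$. Suppose instead that $t_1 < \infty$. Then all five components are non-negative on $[0,t_1]$, and at least one of them vanishes at $t=t_1$; I will rule this out component by component.

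First I would dispatch the (conditionally) decoupled equations. The $x$-equation reads $x' = \lambda - (\beta_1 v + \beta_2 c + d_1)x$, so
\[
x(t) = \Big[ x(0) + \int_0^t \lambda\, e^{\int_0^\xi (\beta_1 v + \beta_2 c + d_1)\,d\sigma}\,d\xi \Big]\, e^{-\int_0^t (\beta_1 v + \beta_2 c + d_1)\,d\sigma},
\]
which is strictly positive on $[0,t_1]$ since $x(0)>0$, $\lambda>0$, and the exponents are finite ($v,c$ being continuous on the compact interval). The $z$-equation factors as $z' = z\big(cy/(h+z) - d_5\big)$, giving $z(t) = z(0)\exp\!\big(\int_0^t (cy/(h+z) - d_5)\,d\sigma\big) > 0$ because $z(0)>0$ and $h+z>0$ on $[0,t_1]$. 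For $c$ and $v$ the integrating factor yields
\[
c(t) = c(0) e^{-d_3 t} + \int_0^t \alpha_2 y(\xi)\, e^{-d_3(t-\xi)}\,d\xi \ge c(0) e^{-d_3 t} > 0,
\]
\[
v(t) = v(0) e^{-d_4 t} + k \int_0^t \Big( \int_0^\infty f_2(s) e^{-m_2 s} y(\xi - s)\,ds \Big) e^{-d_4(t-\xi)}\,d\xi \ge v(0) e^{-d_4 t} > 0,
\]
where the inequalities use $y \ge 0$ on $[0,t_1]$ together with $\varphi_2 \ge 0$ on $(-\infty,0]$ to control the negative-argument values $y(\xi-s)$ appearing in the delay integral. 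Finally, writing $G(t)$ for the convolution forcing in the $y$-equation,
\[
y(t) = y(0)\, e^{-\int_0^t (\alpha_1 + d_2 + pz)\,d\sigma} + \int_0^t G(\xi)\, e^{-\int_\xi^t (\alpha_1 + d_2 + pz)\,d\sigma}\,d\xi \ge y(0)\, e^{-\int_0^t (\alpha_1 + d_2 + pz)\,d\sigma} > 0,
\]
since $x,v,c \ge 0$ on $[0,t_1]$ (and on $(-\infty,0]$) force $G \ge 0$. Thus every component is strictly positive at $t=t_1$, contradicting the maximality of $t_1$; hence $t_1 = \infty$.

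The main care lies not in any individual estimate but in the \emph{coupling}: a priori each component's positivity seems to depend on the others, and this circularity is exactly what the first-exit time $t_1$ resolves, since on $[0,t_1)$ every state is already known to be positive, making all forcing terms non-negative simultaneously. The one genuinely infinite-dimensional subtlety is the distributed-delay terms $\int_0^\infty f_i(s) e^{-m_i s} y(\xi - s)\,ds$ and the analogous products inside $G$: these are improper integrals reaching back to arbitrarily negative times, so I must check that they converge and that the non-negativity of the histories $\varphi_2$ (and of the products giving $xv$, $xc$) on all of $(-\infty,0]$ forces the integrands to be non-negative. Convergence follows from the fading-memory structure of $C_\alpha$, in which the weight $e^{-m_i s}$ dominates the at-most-$e^{-\alpha\theta}$ growth of admissible histories once the solution is known to be bounded. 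This is the same integrating-factor-plus-invariance mechanism as in \cite{yang2023global}, and it may equivalently be phrased as positive invariance of the convex cone of non-negative histories anticipated in the introduction.
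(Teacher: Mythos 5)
Your proposal is correct and follows essentially the same strategy as the paper's proof: argue by contradiction at the first time $t_1$ where some component could vanish, and use the fact that all components (and histories) are non-negative up to $t_1$ to obtain a strictly positive lower bound for each component at $t_1$. The only difference is cosmetic—you use explicit variation-of-constants representations where the paper uses differential inequalities of the form $u' \ge -bu$ with Gronwall-type conclusions—and your added remarks on convergence of the infinite-delay integrals under the $C_\alpha$ fading-memory condition are a welcome, though minor, extra precision.
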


	\begin{proof} 
		For all $t \geq 0$, define $m(t) = \min \left\{x(t), y(t), c(t), v(t),  z(t)\right\}$. Given that the initial values are positive, it follows that $m(0) > 0$. To establish that $m(t)$ is positive  for all $t \geq 0$, suppose contrary to our claim, the system does not maintain positivity. Consequently, there must exist a time $t_1 > 0$ such that $m(t) > 0$ for $0 \leq t < t_1$ and $m(t) = 0$ at $t = t_1$.  By analyzing the behavior of $m(t_1)$, we can derive the following five cases:

		(1) If $m(t_1) = x(t_1) = 0$, from the first equation of system (\ref{eq2}), we obtain
		\begin{equation}
			\begin{aligned}
				\frac{dx(t)}{dt} &= \lambda - d_1 x(t) - \beta_1 x(t) v(t) - \beta_2 x(t) c(t) \\
				&\geq -d_1 x(t) - \max\{\beta_1 v(t)\} x(t) - \max\{\beta_2 c(t)\} x(t) \\
				&= -b_1 x(t),
			\end{aligned}
		\end{equation}
		for $t \in [0, t_1]$, where $b_1 = d_1 + \max_{t \in [0, t_1]} \{\beta_1 v(t)\} + \max_{t \in [0, t_1]} \{\beta_2 c(t)\}$. Consequently, $x(t_1) \geq x(0) e^{-b_1 t_1} > 0$, which contradicts the fact that $x(t_1) = 0$.

		(2)		If $m(t_1) = y(t_1) = 0$, from the second equation of system (\ref{eq2}), we obtain
		\begin{equation}
			\begin{aligned}
				\frac{dy(t)}{dt} &= \int_{0}^{\infty} f_1(s) e^{-m_1 s} \beta_1 x(t-s) v(t-s) \, ds + \int_{0}^{\infty} f_1(s) e^{-m_1 s} \beta_2 x(t-s) c(t-s) \, ds \\
				&\quad - (\alpha_1 + d_2) y(t) - p y(t) z(t) \\
				&\geq -(\alpha_1 + d_2) y(t) - p \max_{t \in [0, t_1]} \{z(t)\} y(t) \\
				&= -b_2 y(t),
			\end{aligned}
		\end{equation}
		for $t \in [0, t_1]$, where $b_2 = \alpha_1 + d_2 + p \max_{t \in [0, t_1]} \{z(t)\}$. Consequently, $y(t_1) \geq y(0) e^{-b_2 t_1} > 0$, which contradicts the fact that $y(t_1) = 0$.

		(3) If $m(t_1) = c(t_1) = 0$, according to  the third equation of system (\ref{eq2}), we derive
		\begin{equation}
			\frac{dc(t)}{dt} = \alpha_2 y(t) - d_3 c(t) \geq -d_3 c(t),
		\end{equation}
		for $t \in [0, t_1]$. Consequently, $c(t_1) \geq c(0) e^{-d_3 t_1} > 0$, which contradicts the fact that $c(t_1) = 0$.

		(4)	If $m(t_1) = v(t_1) = 0$, according to the fourth equation of system(\ref{eq2}), we have
		\begin{equation}
			\frac{dv(t)}{dt}=k\int_{0}^{\infty}f_{2}(s)e^{-m_{2}s}y(t-s)ds-d_{4}v(t)\ge-d_{4}v(t),
		\end{equation}
		for $t \in [0, t_1]$. It follows that  $v(t_1)\ge v(0)e^{-d_{4}t_1}>0$, which contradicts to the fact $v(t_{1})=0$.
		
		(5) If $m(t_1) = z(t_1) = 0$, 	by the fifth equation of system(\ref{eq2}), we have 
			\begin{equation}
			\frac{dz(t)}{dt}=c\frac{y(t)z(t)}{h+z(t)}-d_{5}z(t)\ge-d_{5}z(t).
		\end{equation}
		for $t \in [0, t_1]$.	We conclude that  $z(t_1)\ge z(0)e^{-d_{5}t_1}>0$, which contradicts  the fact $z(t_{1})=0$.
	\end{proof}

A pertinent question in this field is whether non-negative initial values guarantee non-negative solutions, a problem that poses considerable challenges. In \cite[Lemma 2 and Lemma 3]{MR1403462}, Yang et al. introduced a criterion to analyze two specific systems, for an analogous result, see \cite[Theorem 2.1]{MR1319817}. Recent studies \cite{yang2023global, MR4034240} have focused on scenarios characterized by positive initial values.  We adopt a novel approach to examine the impact of non-negative initial values on solutions. Specifically, we introduce the concept of a convex cone ${K_5}$ and demonstrate that ${K_5}$ remains invariant under our system. 

 We define a convex cone by 
\[ K_5 = \{(x, y, c, v, z) \in \mathbb{R}^5 : x \geq 0, y \geq 0, c \geq 0, v \geq 0, z \geq 0\} \]
and the interior of \( K_5 \) by 
\[ \mathring{K}_5 = \{(x, y, c, v, z) \in \mathbb{R}^5 : x > 0, y > 0, c > 0, v > 0, z > 0\}. \]
It is clear that \( K_5\) is a convex subset of \(\mathbb{R}^5\). Indeed, Theorem \ref{tho1} essentially states that if the initial conditions resides inside \( \mathring{K}_5  \), then it will remain inside \( \mathring{K}_5  \).  

Our next theorem addresses the boundary of \( K_5 \). Specifically, it indicates that if the solution reaches the boundary \(\partial K_5\), it will either push the solution back inside the cone or maintain its position on the boundary. We define vector fields
\[ X= (x, y, c, v, z) \]
and 
\[ \dot{X}= \left(\frac{dx}{dt}, \frac{dy}{dt}, \frac{dc}{dt}, \frac{dv}{dt}, \frac{dz}{dt}\right).\]

Assume we have a vector $\dot{X}=\left(\frac{dx}{dt}, \frac{dy}{dt}, \frac{dc}{dt}, \frac{dv}{dt}, \frac{dz}{dt}\right)$ at a point $X=(x, y, c, v, z)$ on the boundary of $K_5$. The conditions for the vector $\dot{X}$ to point inside of $K_5$ or remain on the boundary are:

1. For any coordinate of $X$ that is equal to zero, the corresponding component of $\dot{X}$ should be non-negative.

2. For non-zero coordinate of $X$, the corresponding components of $\dot{X}$ can be any real number.

Our next result asserts that \( \dot{X}=|_{\partial K_5} \) either points inside \( K_5 \) or remains tangent to $\partial K_5$.

\begin{figure}
	\centering
	\begin{tikzpicture}
		\begin{axis}[
			view={120}{30}, 
			axis lines=center, 
			xlabel={$y,c,v$}, ylabel={$z$}, zlabel={$x$}, 
			xmax=5, ymax=5, zmax=5, 
			xmin=0, ymin=0, zmin=0, 
			ticks=none, 
			axis line style={->}, 
			xlabel style={anchor=south}, 
			ylabel style={anchor=south}, 
			zlabel style={anchor=south west}, 
			every axis plot post/.append style={
				mark options={solid}
			}
			]
			\node[label={180:{O}},circle,fill,inner sep=1pt] at (axis cs:0,0,0) {};
		\end{axis}
		\node at ([xshift=2.0cm, yshift=0cm]current axis.north west) {$\mathbb{R}^5$};
	\end{tikzpicture}
	\caption{a convex cone $K_5$}
\end{figure}
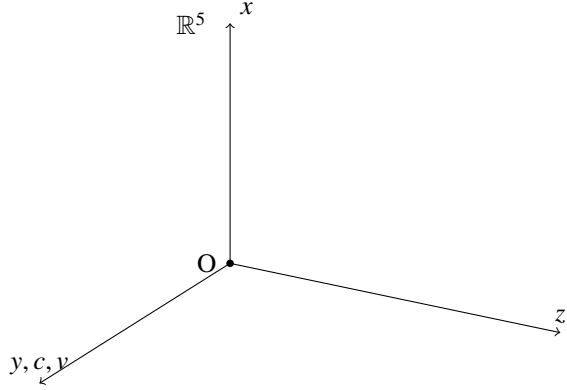

\begin{theorem}\label{tho2}
Suppose that at \( t_0 \geq 0 \), \( X(t_0) \in \partial K_5 \) and \( X(t) \in K_5 \) for all \( t < t_0 \). Then the vector \(\dot{X}(t_0)\) either points inside  of \( K_5 \) or remains tangent to the boundary \( \partial K_5 \). Moreover, \( X(t) \) remains within \( K_5 \) for all \( t \geq t_0 \).

\end{theorem}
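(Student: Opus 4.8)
\emph{Strategy.} The plan is to reduce both assertions to the two subtangentiality conditions stated just above the theorem. Since $\partial K_5$ is exactly the set of points of $K_5$ having at least one vanishing coordinate, condition 2 is vacuous and only condition 1 must be checked: whenever a coordinate of $X(t_0)$ equals zero, the corresponding component of $\dot{X}(t_0)$ must be non-negative. Verifying this yields the first assertion directly, and the second (positive invariance of $K_5$) will then follow from a Nagumo-type argument adapted to the distributed delays.

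\emph{Verifying condition 1.} The essential observation is that the whole history is non-negative: because $\varphi_i \in C_\alpha^+$ and, by hypothesis, $X(t) \in K_5$ for every $t < t_0$, each of $x,y,c,v,z$ is non-negative on $(-\infty, t_0)$. Hence the delayed arguments $x(t_0-s)$, $v(t_0-s)$, $c(t_0-s)$, $y(t_0-s)$ are non-negative for all $s \geq 0$, so both convolution integrals in (\ref{eq2}) are non-negative. I would then run through the five coordinates exactly as in the proof of Theorem \ref{tho1}, but now permitting equality: if $x(t_0)=0$ then $\dot{x}(t_0)=\lambda>0$; if $y(t_0)=0$ the terms $-(\alpha_1+d_2)y-pyz$ drop out and $\dot{y}(t_0)$ reduces to the non-negative integrals; if $c(t_0)=0$ then $\dot{c}(t_0)=\alpha_2 y(t_0)\geq 0$; if $v(t_0)=0$ then $\dot{v}(t_0)=k\int_0^\infty f_2(s)e^{-m_2 s}y(t_0-s)\,ds\geq 0$; and if $z(t_0)=0$ then $\dot{z}(t_0)=0$, so the field is tangent to $\partial K_5$. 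This establishes condition 1 at every vanishing coordinate and proves the first assertion.

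\emph{Positive invariance.} Because the distributed delays place the system outside the scope of the classical ODE invariance theorems, I would obtain the second assertion by a strict-perturbation argument. For $\varepsilon>0$ consider the system obtained by adding $\varepsilon$ to each right-hand side of (\ref{eq2}), keeping the same history on $(-\infty,t_0]$, and let $X_\varepsilon$ be its solution. At any boundary point with vanishing $i$-th coordinate and non-negative history, the computation above upgrades to $\dot{X}_{\varepsilon,i}\geq \varepsilon>0$, i.e. the perturbed field points \emph{strictly} inward. A first-exit-time argument then shows $X_\varepsilon(t)\in\mathring{K}_5$ for all $t>t_0$: if $X_\varepsilon$ first met $\partial K_5$ at some $t_1>t_0$ through its $i$-th coordinate, then $\dot{X}_{\varepsilon,i}(t_1)\leq 0$ (the coordinate reaches zero from positive values), contradicting $\dot{X}_{\varepsilon,i}(t_1)\geq\varepsilon>0$. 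Finally, letting $\varepsilon\to 0^+$ and using continuous dependence of solutions on the right-hand side, $X_\varepsilon(t)\to X(t)$ locally uniformly; since each $X_\varepsilon(t)\geq 0$, the limit satisfies $X(t)\in K_5$ for all $t\geq t_0$.

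\emph{Main obstacle.} I expect the delicate point to be precisely the tangent cases, where the inward-pointing conclusion degenerates to $\dot{X}_i(t_0)=0$ — most visibly for $z$, whose equation $\dot{z}=z\,[\,cy/(h+z)-d_5\,]$ vanishes identically when $z=0$, and for $y$ when both convolution integrals happen to vanish. Here the bare geometric picture does not forbid the coordinate from dipping below zero, and the merit of the $\varepsilon$-perturbation is exactly that it removes every tangency and reduces the analysis to the strict, unambiguous case before passing to the limit. A minor supporting point, the boundedness of the coefficients needed to run the first-exit argument, is immediate from continuity of the solution on the compact interval $[t_0,t_1]$.
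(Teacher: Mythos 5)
Your proposal is correct and follows essentially the same route as the paper's own proof: the same five-coordinate case analysis using the non-negative history to control the distributed-delay integrals, followed by the same strict-perturbation device (the paper adds $\tfrac{1}{N}\hat{e}$ with $\hat{e}=(1,1,1,1,1)$, your $\varepsilon$) with a first-exit-time contradiction and a passage to the limit. Your explicit appeal to continuous dependence on the right-hand side in the limit $\varepsilon\to 0^{+}$ is a small point of extra care that the paper leaves implicit, but it is not a different argument.
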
  
\begin{proof}

Consider the function
\[
f(t, X_t) = (f_1(t, X_t), f_2(t, X_t), f_3(t, X_t), f_4(t, X_t), f_5(t, X_t)),
\]
where \(X_t(\theta) = X(t + \theta)\) for \(\theta \in (-\infty, 0]\), and \(f_i(t, X_t)\) denotes the right-hand side of the i-th equation in system (\ref{eq2}). We express system (\ref{eq2}) as
\begin{equation}
\label{mgf}
\dot{X}(t) = f(t, X_t).
\end{equation}
If \(X(t_0) \in \partial K_5\), then the coordinates of \(X(t_0)\) include at least one zero component. We analyze several cases:

\begin{itemize}
    \item \textbf{Case One:} If \(x(t_0) = 0\), then from the first equation in system (\ref{eq2}), it follows that \(f_1(t_0, X_{t_0}) > 0\).
    \item \textbf{Case Two:} Assuming \(X(t) \in K_5\) for all \(t < t_0\), the integral
    \[
    \int_{0}^{\infty} f_1(s) e^{-m_1 s} (\beta_1 x(t_0 - s) v(t_0 - s) + \beta_2 x(t_0 - s) c(t_0 - s)) \, ds \geq 0.
    \]
    If \(y(t_0) = 0\), then from the second equation in system (\ref{eq2}), we have \(f_2(t_0, X_{t_0}) \geq 0\).
    \item \textbf{Case Three:} If \(c(t_0) = 0\), then by the third equation and under the condition \(X(t) \in K_5\) for all \(t < t_0\), it follows that \(f_3(t_0, X_{t_0}) \geq 0\).
    \item \textbf{Case Four:} If \(v(t_0) = 0\), then by the fourth equation, under the same condition,
    \[
    f_4(t_0, X_{t_0}) = k \int_{0}^{\infty} f_2(s) e^{-m_2 s} y(t_0 - s) ds \geq 0.
    \]
    \item \textbf{Case Five:} If \(z(t_0) = 0\), then by the fifth equation in system (\ref{eq2}), we have \(f_5(t_0, X_{t_0}) = 0\).
\end{itemize}

Thus, \(\dot{X}(t_0)\) always points inside of \(K_5\) or remains tangent to \(\partial K_5\). To substantiate our last conclusion, we employ the methods described in \cite[Lemma 2]{MR1403462} and \cite[Theorem 1]{MR3681108}. Consider the modified equation
\begin{equation}
\label{mga}
\dot{X}(t) = f(t, X_t) + \frac{1}{N} \hat{e},
\end{equation}
where \(N\) is any positive integer and \(\hat{e} = (1, 1, 1, 1, 1)\). Let \(m(t) = \min \{x(t), y(t), c(t), v(t), z(t)\}\). If \(m(t)\) is not non-negative, there exists a first time \(t_1 >t_0\)  such that \(m(t_1) = 0\) and \(m(t) > 0\) for \(t \in (t_0, t_1)\). Without loss of generality, assuming that \(m(t_1) = y(t_1)\), we observe that \(X(t) \geq 0\) for \(t \leq t_1\) and \(f_2(t_1, X_{t_1}) + \frac{1}{N} > 0\), this contradicts \(\frac{dy}{dt}\big|_{t = t_1} \leq 0\). Consequently, the solution to (\ref{mga}) remains in \(K_5\) for \(t \geq t_0\). Letting \(N \rightarrow \infty\), we conclude that the solution to (\ref{eq2}) stays in \(K_5\) for \(t \geq t_0\).

\end{proof}

Combining Theorems \ref{tho1}  and \ref{tho2}, we conclude that all solutions of system(\ref{eq2}) with   non-negative  initial conditions are always non-negative.  
Furthermore, the proof of Theorems \ref{tho1} demonstrates that a solution, provided non-negative initial conditions, remains within $\mathring{K}_5$ for all $t > t_0$ once it is in $\mathring{K}_5$ at $t_0$.

In fact, the  method described in Theorem \ref{tho2} can be easily generalized to a large class of systems.  We define a convex cone in $\mathbb{R}^n$ as follows:
\[ K_n = \left\{(x_1, x_2, \cdots, x_n) \in \mathbb{R}^n : x_1 \geq 0, x_2 \geq 0, \cdots, x_n \geq 0\right\}, \] and its boundary is denoted by $\partial K_n$. 
Consider the differential equation with delay:
\begin{equation}\label{mge}
\dot{X}(t) = f(t, X_t), 
\end{equation}
where the delay can be either finite or infinite. Here, $X(t) = \left(x_1(t), x_2(t), \cdots, x_n(t)\right)$, and $X_t(\theta) = X(t + \theta)$ for $\theta$ in the time delay interval. The function $f(t, X_t) = \left(f_1(t, X_t), f_2(t, X_t), \cdots, f_n(t, X_t)\right)$ is assumed to be Lipschitz continuous in its second argument on each compact subset of the domain of $f$, ensuring that the system (\ref{mge}) admits a unique solution for each initial condition in an appropriate space.

Using the same method in Theorem {\ref{tho2}}, we can prove the following result.

\begin{theorem}
For any \( t_0 \geq 0 \), if \( X(t_0) \in \partial K_n \),  \( X(t) \) stays within \( K_n \) for all \( t < t_0 \), and  the vector \( f(t_0, X_{t_0})\) either points inside of \( K_n \) or remains tangent to the boundary \( \partial K_n \). 
Then \( X(t) \) will continue to stay within \( K_n \) for all \( t \geq t_0 \).
\end{theorem}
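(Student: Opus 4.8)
The plan is to replay the perturbation argument from the proof of Theorem \ref{tho2}, now carried out coordinatewise in $\mathbb{R}^n$. First I would convert the geometric hypothesis into a sign condition: since $K_n$ is the nonnegative orthant, the vector $f(t, X_t)$ at a boundary point $X \in \partial K_n$ points into $K_n$ or is tangent to $\partial K_n$ exactly when $f_i(t, X_t) \geq 0$ for every index $i$ with $x_i = 0$. Because the inward/tangent condition is hypothesized for \emph{any} $t_0 \geq 0$, this sign condition is in force at every instant at which the trajectory meets $\partial K_n$ having stayed in $K_n$ beforehand; I will need it below at a time distinct from the originally fixed $t_0$.

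Next I would introduce the regularized family $\dot{X}^{(N)}(t) = f(t, X_t^{(N)}) + \frac{1}{N}\hat{e}$ with $\hat{e} = (1, \ldots, 1)$ and the same history as $X$ on the delay interval up to $t_0$. The strictly positive perturbation upgrades the possibly tangent case ($f_i = 0$) to a strictly inward one ($f_i + \frac{1}{N} > 0$), which is precisely what makes a first-exit-time contradiction work. Set $m(t) = \min_{1 \leq i \leq n} x_i^{(N)}(t)$. The zero coordinates of $X(t_0)$ satisfy $\dot{x}_i^{(N)}(t_0) = f_i + \frac{1}{N} > 0$, while the positive ones stay positive by continuity, so $m(t) > 0$ on a right neighbourhood of $t_0$. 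If $X^{(N)}$ ever left $K_n$, there would be a first time $t_1 > t_0$ with $m(t_1) = 0$ and $m > 0$ on $(t_0, t_1)$; choosing a minimizing index $j$ gives $x_j^{(N)}(t_1) = 0$ with $x_j^{(N)} > 0$ just before $t_1$, forcing $\dot{x}_j^{(N)}(t_1) \leq 0$. But $X^{(N)}(t) \in K_n$ for all $t \leq t_1$, so the sign condition applies at $t_1$ and yields $\dot{x}_j^{(N)}(t_1) = f_j(t_1, X_{t_1}^{(N)}) + \frac{1}{N} > 0$, a contradiction. Hence each $X^{(N)}$ remains in $K_n$ for $t \geq t_0$. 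I would then remove the regularization by letting $N \to \infty$: the Lipschitz continuity assumed for $f$ gives continuous dependence of solutions on the right-hand side, so $X^{(N)} \to X$ uniformly on every compact subinterval of $[t_0, \infty)$, and since each $X^{(N)}(t) \in K_n$ with $K_n$ closed, the limit satisfies $X(t) \in K_n$ for all $t \geq t_0$.

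The main obstacle, as in Theorem \ref{tho2}, is the tangent case: when $f_j(t_1, X_{t_1}) = 0$ the unperturbed field \eqref{mge} neither prevents nor forbids an immediate exit, so a direct contradiction fails and the perturbation is genuinely needed. Once it is introduced, the burden shifts to justifying the limit $N \to \infty$, which is exactly where the Lipschitz hypothesis on $f$ (and the attendant well-posedness and continuous-dependence theory for delay equations) is indispensable. A secondary point to handle carefully is that the sign condition is invoked at the exit time $t_1$ rather than at $t_0$; this is legitimate only because the hypothesis is assumed uniformly in $t_0$ and because the trajectory provably stays in $K_n$ up to $t_1$, so that the past history $X_{t_1}^{(N)}$ entering $f$ is itself nonnegative.
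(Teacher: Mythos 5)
Your proposal is correct and follows essentially the same route as the paper: the paper proves this generalization by invoking verbatim the method of Theorem \ref{tho2}, namely the regularized system $\dot{X}(t) = f(t,X_t) + \frac{1}{N}\hat{e}$, a first-exit-time contradiction using the sign condition at the exit time, and passage to the limit $N \to \infty$. Your treatment is in fact slightly more careful than the paper's, since you make explicit both the reinterpretation of the hypothesis as a quasi-positivity condition valid at every boundary contact (needed to apply it at $t_1$ and to the perturbed solution $X^{(N)}$ rather than $X$) and the role of Lipschitz continuity in justifying the convergence $X^{(N)} \to X$, both of which the paper leaves implicit.
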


Next, we prove the boundedness  of solutions to system (\ref{eq2}). 
	\begin{theorem}
		All solutions of system (\ref{eq2}) with   non-negative  initial conditions are always bounded. 
	\end{theorem}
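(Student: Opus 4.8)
The plan is to establish boundedness componentwise, following the chain of dependencies in (\ref{eq2}) and using one auxiliary functional to tame the distributed-delay source term. Throughout I would use that every component is nonnegative (Theorems \ref{tho1} and \ref{tho2}), so that the loss terms $pyz$ and $cyz/(h+z)$ carry a definite sign. For the first component, the equation for $x$ gives $\dot x \le \lambda - d_1 x$, and a standard comparison argument yields $x(t) \le \max\{x(0),\lambda/d_1\}$ for all $t\ge 0$; in particular $x$ is bounded by some constant $M_x$.

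The crux is bounding $y$, because its source is the history integral $\int_0^\infty f_1(s)e^{-m_1 s}(\beta_1 xv+\beta_2 xc)(t-s)\,ds$, whose control would naively require $v$ and $c$ at past times, which in turn depend on $y$. To close this loop I would introduce
\[
N(t) = \int_0^\infty f_1(s)\,e^{-m_1 s}\,x(t-s)\,ds + y(t).
\]
Differentiating and inserting $\dot x(t-s)$ from the first equation, the two identical copies of $\int_0^\infty f_1(s)e^{-m_1 s}(\beta_1 xv+\beta_2 xc)(t-s)\,ds$ cancel, leaving
\[
\dot N = \lambda A_1 - d_1\int_0^\infty f_1(s)e^{-m_1 s}x(t-s)\,ds - (\alpha_1+d_2)y - pyz \le \lambda A_1 - \delta N,
\]
where $\delta=\min\{d_1,\alpha_1+d_2\}$ and I have discarded $pyz\ge 0$. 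Comparison then gives $N(t)\le\max\{N(0),\lambda A_1/\delta\}$, so $N$, and hence $y$, is bounded by some $M_y$.

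With $y\le M_y$ in hand, the remaining three components follow from linear differential inequalities. The third equation gives $\dot c \le \alpha_2 M_y - d_3 c$, hence $c\le M_c$; the fourth gives $\dot v \le kA_2 M_y - d_4 v$ (the history integral being controlled by the bound on $y$ together with the fading-memory bound on $\varphi_2$), hence $v\le M_v$; and the fifth, using $z/(h+z)\le 1$ so that $cyz/(h+z)\le cy\le M_c M_y$, gives $\dot z \le M_c M_y - d_5 z$, hence $z$ is bounded. Each step is closed by the same comparison principle.

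The main obstacle is analytic rather than algebraic: I must guarantee that $N(0)$ and all the history integrals are finite along solutions and that differentiation under the integral sign is legitimate. This is precisely where the fading-memory space $C_\alpha$ with $0<\alpha<\min\{m_1,m_2\}/2$ is essential. Since $|\varphi_i(\theta)|\le \|\varphi_i\|\,e^{-\alpha\theta}$, for $t-s<0$ one has $f_i(s)e^{-m_i s}|\varphi_i(t-s)| \le \|\varphi_i\|\,e^{-\alpha t} f_i(s)e^{-(m_i-\alpha)s}$, and because $m_i-\alpha>0$ the weight $e^{-m_i s}$ dominates the admissible growth $e^{\alpha s}$ of the history, making $f_i(s)e^{-m_i s}\varphi_i(\cdot-s)$ integrable. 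I would invoke the standard theory of functional differential equations with infinite delay cited in the paper to justify the interchange of differentiation and integration and the well-posedness of the functional $N$ along solutions.
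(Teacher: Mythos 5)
Your proposal is correct and follows essentially the same route as the paper: your functional $N(t)=\int_0^\infty f_1(s)e^{-m_1 s}x(t-s)\,ds+y(t)$ is exactly the paper's $B(t)$, with the same decay constant $\min\{d_1,\alpha_1+d_2\}$ and the same comparison argument, followed by the same linear differential inequalities for $x$, $c$, $v$, $z$. Your additional discussion of why the fading-memory space $C_\alpha$ (with $\alpha<\min\{m_1,m_2\}/2$) makes the history integrals finite is a point the paper leaves implicit, and is a welcome refinement rather than a different method.
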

	\begin{proof}
		Let $(x(t), y(t), c(t), v(t), z(t))$ be a solution of system (\ref{eq2}) with non-negative initial conditions. 
		
		We define
		\begin{equation}
			B(t) = \int_{0}^{\infty} e^{-m_{1}s} f_{1}(s) x(t-s) \, ds + y(t).
		\end{equation}
		
		Let $r = \min\{d_{1}, (\alpha_{1} + d_{2})\}>0$. We can calculate the derivative of $B(t)$ with respect to $t$ as follows:
		\begin{equation}
			\begin{aligned}
				\frac{dB(t)}{dt} &= \int_{0}^{\infty} f_{1}(s) e^{-m_{1}s} (\lambda - d_{1} x(t-s) - \beta_{1} x(t-s) v(t-s) - \beta_{2} x(t-s) c(t-s)) \, ds \\
				&\quad + \int_{0}^{\infty} f_{1}(s) e^{-m_{1}s} \beta_{1} x(t-s) v(t-s) \, ds + \int_{0}^{\infty} f_{1}(s) e^{-m_{1}s} \beta_{2} x(t-s) c(t-s) \, ds \\
				&\quad - (\alpha_{1} + d_{2}) y(t) - p y(t) z(t) \\
				&= \lambda A_{1} - d_{1} \int_{0}^{\infty} f_{1}(s) e^{-m_{1}s} x(t-s) \, ds - (\alpha_{1} + d_{2}) y(t) - p y(t) z(t) \\
				&\leq \lambda A_{1} - \left[d_{1} \int_{0}^{\infty} f_{1}(s) e^{-m_{1}s} x(t-s) \, ds + (\alpha_{1} + d_{2}) y(t)\right] \\
				&\leq \lambda A_{1} - r B(t).
			\end{aligned}
		\end{equation}
		This implies that
		\begin{equation} \label{bes}
		B(t) \leq \frac{\lambda A_{1}}{r}+\frac{rB_0-\lambda A_1}{re^{rt}}. 
		\end{equation}
		Hence it follows that,  
        \begin{equation}
		y(t) \leq \frac{\lambda A_{1}}{r}+\frac{rB_0-\lambda A_1}{re^{rt}}\leq C_1,\end{equation} here $C_1=\frac{\lambda A_{1}}{r}+\frac{|rB_0-\lambda A_1|}{r}$. 
		From the first, the third, fourth, and fifth equations of model (\ref{eq2}), we have
		\begin{equation}
			\begin{aligned}
				\frac{dx(t)}{dt} &= \lambda - d_{1} x(t) - \beta_{1} x(t) v(t) - \beta_{2} x(t) c(t) \leq \lambda - d_{1} x(t), \\
				\frac{dc(t)}{dt} &= \alpha_{2} y(t) - d_{3} c(t) \leq C_2 - d_{3} c(t), \\
				\frac{dv(t)}{dt} &= k \int_{0}^{\infty} f_{2}(s) e^{-m_{2}s} y(t-s) \, ds - d_{4} v(t) \leq C_3- d_{4} v(t), \\
				\frac{dz(t)}{dt} &= c \frac{y(t) z(t)}{h + z(t)} - d_{5} z(t) \leq c y(t) - d_{5} z(t) \leq C_4 - d_{5} z(t),
			\end{aligned}
		\end{equation}  where  $C_2$, $C_3$, and $C_4$ are positive constants. 
	By similar estimates as in  (\ref{bes}),  we conclude that 
        $x(t), c(t), v(t), z(t)$ are all bounded.

	\end{proof}
	\section{Globally asymptotically stable}
	 An equilibrium point of the system (\ref{eq2}) is called globally asymptotically stable if it is stable and, regardless of the initial conditions anywhere in the state space (not just within a specific neighborhood), the system's solution converges to the equilibrium point as time approaches infinity. In this section, we discuss the global stability of $E_{1},E_{2}$ and $E_{3}$ by defining Lyapunov functions and LaSalle's invariance principle. In the beginning, we define $g(s)=s-1-\ln{s}$.
	
	\begin{theorem}\label{theorem1}
		If $\mathcal{R}_{0}<1$, $E_{0}$ is globally asymptotically stable.
	\end{theorem}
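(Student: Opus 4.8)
The plan is to build a Lyapunov functional whose time derivative along trajectories is governed by the sign of $\mathcal{R}_0-1$, and then to invoke LaSalle's invariance principle on the bounded orbits furnished by the boundedness theorem. The first step is the absorbing estimate for the uninfected compartment: since $\dot{x}=\lambda-d_1x-\beta_1xv-\beta_2xc\le\lambda-d_1x$ and, on the face $x=x_0$, $\dot{x}=-\beta_1x_0v-\beta_2x_0c\le 0$, the region $\{x\le x_0\}$ is forward invariant and $\limsup_{t\to\infty}x(t)\le x_0$. This is precisely the estimate that converts the nonlinear incidence into a usable linear bound, and it is the reason no $g$-term in $x$ is needed for $E_0$.

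Next I would set
\[
U(t)=y(t)+\frac{\beta_2A_1x_0}{d_3}\,c(t)+\frac{\beta_1A_1x_0}{d_4}\,v(t)+U_{\mathrm{del}}(t),
\]
where $U_{\mathrm{del}}$ collects three delay-compensating integrals of linear type,
\[
\begin{aligned}
U_{\mathrm{del}}(t)=\ & \beta_1x_0\int_0^\infty f_1(s)e^{-m_1s}\!\int_{t-s}^t v(\xi)\,d\xi\,ds+\beta_2x_0\int_0^\infty f_1(s)e^{-m_1s}\!\int_{t-s}^t c(\xi)\,d\xi\,ds\\
& +\frac{\beta_1A_1x_0k}{d_4}\int_0^\infty f_2(s)e^{-m_2s}\!\int_{t-s}^t y(\xi)\,d\xi\,ds.
\end{aligned}
\]
Differentiating, the delayed terms $\int f_1e^{-m_1s}v(t-s)$, $\int f_1e^{-m_1s}c(t-s)$ and $\int f_2e^{-m_2s}y(t-s)$ produced by $U_{\mathrm{del}}$ cancel exactly against the corresponding delayed contributions coming from $\dot{y}$ (after the bound $x(t-s)\le x_0$) and from $\dot{v}$. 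The coefficients are chosen so that the surviving linear $c$- and $v$-terms $(\beta_2A_1x_0-a_1d_3)c$ and $(\beta_1A_1x_0-a_2d_4)v$, with $a_1=\beta_2A_1x_0/d_3$ and $a_2=\beta_1A_1x_0/d_4$, vanish.

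What remains is a single sign-definite expression. Using the identity $\tfrac{\beta_2A_1x_0\alpha_2}{d_3}+\tfrac{\beta_1A_1x_0kA_2}{d_4}=(\alpha_1+d_2)\mathcal{R}_0$, I would obtain
\[
\dot{U}(t)\le(\alpha_1+d_2)(\mathcal{R}_0-1)\,y(t)-p\,y(t)z(t)\le 0\qquad\text{whenever }\mathcal{R}_0<1,
\]
since $y,z\ge 0$. I would then pass to the (nonempty, compact) $\omega$-limit set, which lies in the forward-invariant region $\{x\le x_0\}$; a complete orbit in that set satisfies $x(\sigma)\le x_0$ for all $\sigma\in\mathbb{R}$, so the bound $x(t-s)\le x_0$ holds for every $s\ge 0$ and the estimate above becomes an exact identity there. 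Applying LaSalle's invariance principle, $\dot{U}=0$ forces $y\equiv 0$ (its coefficient is strictly negative), whereupon the system collapses to $\dot{c}=-d_3c$, $\dot{v}=-d_4v$, $\dot{z}=-d_5z$ and $\dot{x}=\lambda-d_1x$, driving $c,v,z\to 0$ and $x\to x_0$; hence the largest invariant set is the singleton $\{E_0\}$, giving global attractivity, and combined with the standard stability argument, global asymptotic stability.

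The main obstacle is not the algebra but the infinite-delay functional-analytic setting. Two points require care: first, showing that $U$ and $\dot{U}$ are well defined, i.e.\ that the distributed-delay integrals converge, which rests on the decay of $f_i(s)e^{-m_is}$, the exponential weight $e^{\alpha\theta}$ of the fading-memory space $C_\alpha$ with $0<\alpha<\tfrac12\min\{m_1,m_2\}$, and the boundedness theorem; and second, ensuring that the orbit is precompact in $C_\alpha$ so that LaSalle's invariance principle is legitimately available and the $\omega$-limit set is genuinely invariant. The $\omega$-limit set reduction above is what cleanly circumvents the otherwise delicate issue of replacing $x(t-s)$ by $x_0$ inside an integral whose variable $s$ ranges back into the initial data, where $x$ may exceed $x_0$.
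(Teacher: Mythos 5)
Your functional and its algebra are correct: with the weights $\beta_2A_1x_0/d_3$ and $\beta_1A_1x_0/d_4$ and the three linear delay integrals, the identity $\beta_2A_1\alpha_2x_0/d_3+\beta_1A_1kA_2x_0/d_4=(\alpha_1+d_2)\mathcal{R}_0$ does give $\dot U\le(\alpha_1+d_2)(\mathcal{R}_0-1)y-pyz$ once the bound $x(t-s)\le x_0$ is granted. This is a genuinely different route from the paper's: the paper's $L_0$ keeps the Volterra term $A_1x_0\,g\left(x(t)/x_0\right)$ and uses delay integrals of the \emph{bilinear} products $x(\eta)v(\eta)$ and $x(\eta)c(\eta)$ (its $N_{01},N_{02}$), so the delayed terms cancel exactly and the stray factor $x(t)$ is converted to $x_0$ by the $g$-term rather than by an inequality; consequently $\dot L_0\le0$ holds along every non-negative solution from time zero, LaSalle applies directly, and the positive definiteness of $L_0$ in all five coordinates also supplies the stability half of ``globally asymptotically stable.'' Your construction trades all of that for a $\limsup$ bound on $x$, and that trade is exactly where the trouble enters.

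The closing step has a genuine gap. Because the delay is infinite, for every $t$ the history reaches back into the initial data, where $x$ may exceed $x_0$; so, as you yourself note, $\dot U\le0$ is not available along the original solution, and the invariance principle cannot be invoked for it. Your fix---apply LaSalle to the flow restricted to the $\omega$-limit set $\Omega$---proves only that every orbit lying \emph{inside} $\Omega$ converges forward in time to $E_0$, i.e.\ that $E_0\in\Omega$; it does not prove $\Omega=\{E_0\}$. A priori $\Omega$ could be a nontrivial invariant set consisting of orbits homoclinic to $E_0$, in which case the original solution need not converge. Two repairs are possible: (i) prove Lyapunov stability of $E_0$ and use the standard lemma that a stable equilibrium contained in an $\omega$-limit set must equal that set---but stability is precisely what your $U$ cannot deliver, since it is degenerate in $x$ and its monotonicity requires the same history bound even locally, so ``the standard stability argument'' you invoke is unproven and is doing real work; or (ii) sharpen your own argument: each orbit $\gamma$ in $\Omega$ has both its $\alpha$- and $\omega$-limit sets equal to $\{E_0\}$ by the same invariance computation, so by continuity and monotonicity of $U$ along $\gamma$ one gets $U(\gamma(t))\le\lim_{s\to-\infty}U(\gamma(s))=U(E_0)=0$, hence $U\equiv0$ along $\gamma$, forcing $y\equiv c\equiv v\equiv0$ and then $x\equiv x_0$, $z\equiv0$, i.e.\ $\Omega=\{E_0\}$. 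Option (ii) rescues global attractivity, but the theorem asserts asymptotic stability, so a stability proof is still owed in your scheme, whereas the paper gets it for free. Finally, precompactness of the orbit in $C_\alpha$ (needed for $\Omega$ to be nonempty, compact and invariant) is flagged but not established in your proposal---though, to be fair, the paper is equally silent on this point.
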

	\begin{proof}
		We define 
		\begin{equation}
			\begin{aligned}
				N_{01} &= \int_{0}^{\infty} \int_{t-s}^{t} f_{1}(s) e^{-m_{1}s} \beta_{1} x(\eta) v(\eta) \, d\eta \, ds, \\
				N_{02} &= \int_{0}^{\infty} \int_{t-s}^{t} f_{1}(s) e^{-m_{1}s} \beta_{2} x(\eta) c(\eta) \, d\eta \, ds, \\
				N_{03} &= k \int_{0}^{\infty} \int_{t-s}^{t} f_{2}(s) e^{-m_{2}s} y(\eta) \, d\eta \, ds.
			\end{aligned}
		\end{equation}
		Differentiating $N_{0i}$ (for $i=1, 2, 3$) with respect to $t$, we obtain
		\begin{equation}
			\begin{aligned}
				\frac{dN_{01}}{dt} &= A_{1} \beta_{1} x(t) v(t) - \int_{0}^{\infty} f_{1}(s) e^{-m_{1}s} \beta_{1} x(t-s) v(t-s) \, ds, \\
				\frac{dN_{02}}{dt} &= A_{1} \beta_{2} x(t) c(t) - \int_{0}^{\infty} f_{1}(s) e^{-m_{1}s} \beta_{2} x(t-s) c(t-s) \, ds, \\
				\frac{dN_{03}}{dt} &= k A_{2} y(t) - k \int_{0}^{\infty} f_{2}(s) e^{-m_{2}s} y(t-s) \, ds.
			\end{aligned}
		\end{equation}
		We define a Lyapunov function as follows:
		\begin{equation}
			\begin{aligned}
				L_{0}(t) &= A_{1} x_{0} g\left(\frac{x(t)}{x_{0}}\right) + y(t) + N_{01} + N_{02} + \frac{\beta_{2} A_{1} x_{0}}{d_{3} \mathcal{R}_{0}} c(t) + \frac{\beta_{1} A_{1} x_{0}}{d_{4} \mathcal{R}_{0}} v(t) \\
				&\quad + \frac{ph}{c} z(t) + \frac{\beta_{1} A_{1} x_{0}}{d_{4} \mathcal{R}_{0}} N_{03}.
			\end{aligned}
		\end{equation}
		Calculating the derivative of $L_{0}(t)$, we obtain
			\begin{equation}
			\begin{aligned}
				\frac{dL_{0}(t)}{dt} &= A_{1} d_{1} x_{0} \left(2 - \frac{x_{0}}{x(t)} - \frac{x(t)}{x_{0}}\right) + \frac{A_{1} \beta_{1} x_{0}}{\mathcal{R}_{0}} (\mathcal{R}_{0} - 1) v(t) + \frac{A_{1} \beta_{2} x_{0}}{\mathcal{R}_{0}} (\mathcal{R}_{0} - 1) c(t) \\
				&\quad + p y(t) z(t) \left(\frac{h}{h+z(t)} - 1\right) - \frac{ph d_{5}}{c} z(t).
			\end{aligned}
		\end{equation}
		Note that 
		\begin{equation}
			\begin{aligned}
				&2 - \frac{x_{0}}{x(t)} - \frac{x(t)}{x_{0}} \leq 0, \quad x(t) \geq 0, \\
				&\frac{h}{h+z(t)} - 1 \leq 0, \quad z(t) \geq 0.
			\end{aligned}
		\end{equation}
		If $\mathcal{R}_{0} < 1$, we have $\frac{dL_{0}(t)}{dt} \leq 0$. Moreover, $\frac{dL_{0}(t)}{dt} = 0$ if and only if $x(t) = x_{0}$ and $$y(t) = c(t) = v(t) = z(t) = 0.$$ $\mathcal{M}_{0} = \{E_{0}\}$ is the largest invariant subset of $\{(x(t), y(t), c(t), v(t), z(t)) \mid \frac{dL_{0}(t)}{dt} = 0\}$. From LaSalle's invariance principle, we conclude  that $E_{0}$ is globally asymptotically stable provided  $\mathcal{R}_{0} < 1$.
	\end{proof}

	\begin{theorem}\label{theorem2}
		If $\mathcal{R}_{1}<1<\mathcal{R}_{0}$, then $E_{1}$ is globally asymptotically stable.
	\end{theorem}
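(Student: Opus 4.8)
The plan is to mirror the construction in Theorem \ref{theorem1}, but to center the Lyapunov functional at the immunity-inactivated equilibrium $E_1=(x_1,y_1,c_1,v_1,0)$, again using the Volterra function $g(s)=s-1-\ln s$ (which satisfies $g(s)\ge 0$ with equality only at $s=1$). First I would record the equilibrium identities defining $E_1$, namely
\[
\lambda=d_1x_1+\beta_1x_1v_1+\beta_2x_1c_1,\quad A_1(\beta_1x_1v_1+\beta_2x_1c_1)=(\alpha_1+d_2)y_1,\quad \alpha_2y_1=d_3c_1,\quad kA_2y_1=d_4v_1,
\]
together with the scalar identity $\mathcal{R}_1=\dfrac{cy_1}{hd_5}$, obtained by substituting the explicit value of $y_1$ into the definition of $\mathcal{R}_1$. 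These relations will be used to eliminate $\lambda$ and to rewrite every incidence term as a multiple of an equilibrium constant, which is what makes the logarithmic contributions cancel in the end.

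Next I would propose the functional
\[
L_1(t)=A_1x_1\,g\!\Big(\tfrac{x}{x_1}\Big)+y_1\,g\!\Big(\tfrac{y}{y_1}\Big)+a_c\,c_1\,g\!\Big(\tfrac{c}{c_1}\Big)+a_v\,v_1\,g\!\Big(\tfrac{v}{v_1}\Big)+\tfrac{ph}{c}\,z+N_1,
\]
where $a_c,a_v>0$ are weights to be fixed (the analogues of the coefficients $\tfrac{\beta_2A_1x_0}{d_3\mathcal{R}_0}$ and $\tfrac{\beta_1A_1x_0}{d_4\mathcal{R}_0}$ appearing in $L_0$), and $N_1$ is a sum of compensating double-integral delay terms of the form
\[
\int_0^\infty\!\!\int_{t-s}^{t}f_1(s)e^{-m_1s}\beta_1x_1v_1\,g\!\Big(\tfrac{x(\eta)v(\eta)}{x_1v_1}\Big)\,d\eta\,ds
\]
plus the matching $\beta_2xc$ and $kf_2y$ pieces. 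The weights and the constants inside $N_1$ are chosen precisely so that differentiating $N_1$ reproduces, with the opposite sign, the distributed-delay integrals generated by differentiating the first four terms, leaving only point values evaluated at time $t$.

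Then I would differentiate $L_1$ along solutions, substitute the equilibrium identities, and reorganize. The death part of the uninfected compartment contributes the nonpositive quantity $-A_1d_1\tfrac{(x-x_1)^2}{x}\le 0$ (equivalently the factor $2-\tfrac{x_1}{x}-\tfrac{x}{x_1}\le 0$), while its incidence cross-terms feed into the logarithmic bookkeeping. The principal difficulty is exactly this bookkeeping: after substitution, the cross terms from the incidence, cytokine, virus and delay pieces must assemble into expressions of the form $-(\text{const})\,g(\cdot)$ whose arguments (for instance $\tfrac{x_1}{x}$, the combination $\tfrac{x v c_1}{x_1 v_1 c}$, and the time-shifted ratios coming from $N_1$) multiply to $1$; only then do the $\ln$ parts telescope to zero and each $g(\cdot)\ge0$ forces the whole block to be nonpositive. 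Getting $a_c,a_v$ exactly right so that these products close up is the crux of the calculation.

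Finally, the immune compartment is where the hypothesis $\mathcal{R}_1<1$ enters. The $z$-dependent terms arising from $-pyz$ in $\dot y$ and from $\tfrac{ph}{c}\dot z$ collect into $-\tfrac{pyz^2}{h+z}+pz\big(y_1-\tfrac{hd_5}{c}\big)$; invoking $\mathcal{R}_1=\tfrac{cy_1}{hd_5}$ gives $y_1-\tfrac{hd_5}{c}=\tfrac{hd_5}{c}(\mathcal{R}_1-1)<0$, so this block is nonpositive exactly when $\mathcal{R}_1<1$. Combining all pieces yields $\dot L_1\le 0$, with $\dot L_1=0$ forcing $x=x_1$, $y=y_1$, $c=c_1$, $v=v_1$ and $z=0$. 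The largest invariant subset of $\{\dot L_1=0\}$ is then $\{E_1\}$, and LaSalle's invariance principle delivers the global asymptotic stability of $E_1$ under $\mathcal{R}_1<1<\mathcal{R}_0$.
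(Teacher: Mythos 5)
Your proposal is correct and follows essentially the same route as the paper's proof: a Volterra-type functional built from $g(s)=s-1-\ln s$ centered at $E_1$ with compensating double-integral delay terms, a linear $z$-term weighted by $ph/c$, the identity $\mathcal{R}_1=\frac{cy_1}{hd_5}$ making the immune block $-\frac{pyz^2}{h+z}+\frac{phd_5}{c}(\mathcal{R}_1-1)z$ nonpositive when $\mathcal{R}_1<1$, and LaSalle's invariance principle. Up to your overall rescaling by $A_1$, the weights you leave undetermined are exactly the paper's choices, $a_c=\frac{A_1\beta_2 x_1}{d_3}$ and $a_v=\frac{A_1\beta_1 x_1}{d_4}$, i.e.\ the coefficients $\frac{\beta_2 x_1 c_1}{d_3}$ and $\frac{\beta_1 x_1 v_1}{d_4}$ of $g\left(\frac{c}{c_1}\right)$ and $g\left(\frac{v}{v_1}\right)$ in the paper's $L_1$.
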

	\begin{proof}
		We define
		\begin{equation}
			\begin{aligned}
				N_{11}&=\beta_{1}x_{1}v_{1}\int_{0}^{\infty}\int_{t-s}^{t}f_{1}(s)e^{-m_{1}s}\left(\frac{x(\eta)v(\eta)}{x_{1}v_{1}}-1-\ln{\frac{x(\eta)v(\eta)}{x_{1}v_{1}}}\right)d\eta ds,\\
				N_{12}&=\beta_{2}x_{1}c_{1}\int_{0}^{\infty}\int_{t-s}^{t}f_{1}(s)e^{-m_{1}s}\left(\frac{x(\eta)c(\eta)}{x_{1}c_{1}}-1-\ln{\frac{x(\eta)c(\eta)}{x_{1}c_{1}}}\right)d\eta ds,\\
				N_{13}&=\frac{k\beta_{1}x_{1}y_{1}}{d_{4}}\int_{0}^{\infty}\int_{t-s}^{t}f_{2}(s)e^{-m_{2}s}\left(\frac{y(\eta)}{y_{1}}-1-\ln{\frac{y(\eta)}{y_{1}}}\right)d\eta ds.
			\end{aligned}
		\end{equation}
		Differentiating $N_{1i} (i=1, 2, 3)$ with respect to $t$, we obtain that
		\begin{equation}
			\begin{aligned}
				\frac{dN_{11}}{dt}&=A_{1}\beta_{1}x(t)v(t)-\int_{0}^{\infty}f_{1}(s)e^{-m_{1}s}\beta_{1}x(t-s)v(t-s)ds\\
				&\enspace+\beta_{1}x_{1}v_{1}\int_{0}^{\infty}f_{1}(s)e^{-m_{1}s}\ln{\frac{x(t-s)v(t-s)}{x(t)v(t)}}ds,\\
				\frac{dN_{12}}{dt}&=A_{1}\beta_{2}x(t)c(t)-\int_{0}^{\infty}f_{1}(s)e^{-m_{1}s}\beta_{2}x(t-s)c(t-s)ds\\
				&\enspace+\beta_{2}x_{1}c_{1}\int_{0}^{\infty}f_{1}(s)e^{-m_{1}s}\ln{\frac{x(t-s)c(t-s)}{x(t)c(t)}}ds,\\
				\frac{dN_{13}}{dt}&=\frac{k\beta_{1}x_{1}y_{1}}{d_{4}}\int_{0}^{\infty}f_{2}(s)e^{-m_{2}s}\left(\frac{y(t)}{y_{1}}-\frac{y(t-s)}{y_{1}}+\ln{\frac{y(t-s)}{y(t)}}\right)ds.
			\end{aligned}
		\end{equation}
		Define a Lyapunov function as follows:
		\begin{equation}
			\begin{aligned}
				L_{1}(t)&=x_{1}g\left(\frac{x(t)}{x_{1}}\right)+\frac{1}{A_{1}}y_{1}g\left(\frac{y(t)}{y_{1}}\right)+\frac{\beta_{2}x_{1}c_{1}}{d_{3}}g\left(\frac{c(t)}{c_{1}}\right)+\frac{\beta_{1}x_{1}v_{1}}{d_{4}}g\left(\frac{v(t)}{v_{1}}\right)+\frac{ph}{cA_{1}}z(t)\\
				&\enspace+\frac{1}{A_{1}}N_{11}+\frac{1}{A_{1}}N_{12}+N_{13}.
			\end{aligned}
		\end{equation}
		For $E_{1}$, it is easy to know that the following relationship is established:
		\begin{equation}
			\begin{aligned}
				&\lambda-d_{1}x_{1}-\beta_{1}x_{1}v_{1}-\beta_{2}x_{1}c_{1}=0,\\
				&\beta_{1}A_{1}x_{1}v_{1}+\beta_{2}A_{1}x_{1}c_{1}-(\alpha_{1}+d_{2})y_{1}=0.
			\end{aligned}
		\end{equation}
		Calculating the derivative of $L_{1}(t)$, we obtain
			\begin{equation}
			\begin{aligned}
				\frac{dL_{1}(t)}{dt}&=d_{1}x_{1}\left(2-\frac{x_{1}}{x(t)}-\frac{x(t)}{x_{1}}\right)-(\beta_{1}x_{1}v_{1}+\beta_{2}x_{1}c_{1})g\left(\frac{x_{1}}{x(t)}\right)\\
				&\enspace-\frac{\beta_{1}x_{1}v_{1}}{A_{1}}\int_{0}^{\infty}f_{1}(s)e^{-m_{1}s}g\left(\frac{x(t-s)v(t-s)y_{1}}{x_{1}v_{1}y(t)}\right)ds\\
				&\enspace-\frac{\beta_{2}x_{1}c_{1}}{A_{1}}\int_{0}^{\infty}f_{1}(s)e^{-m_{1}s}g\left(\frac{x(t-s)c(t-s)y_{1}}{x_{1}c_{1}y(t)}\right)ds\\
				&\enspace-\frac{\beta_{1}x_{1}v_{1}}{A_{2}}\int_{0}^{\infty}f_{2}(s)e^{-m_{2}s}g\left(\frac{y(t-s)v_{1}}{v(t)y_{1}}\right)ds\\
				&\enspace-\beta_{2}x_{1}c_{1}g\left(\frac{c_{1}y(t)}{c(t)y_{1}}\right)+\frac{p}{A_{1}}y(t)z(t)\left(\frac{h}{h+z(t)}-1\right)+\frac{phd_{5}}{A_{1}c}(\mathcal{R}_{1}-1)z(t).
			\end{aligned}
		\end{equation}
		Note that
		\begin{equation}
			\begin{aligned}
				&g(s)=s-1-\ln s\ge0,\enspace s>0,\\
				&2-\frac{x_{1}}{x(t)}-\frac{x(t)}{x_{1}}\le0,\enspace x(t)\ge0,\\
				&\frac{h}{h+z(t)}-1\le0,\enspace z(t)\ge0.
			\end{aligned}
		\end{equation}
		It follows that  $\frac{dL_{1}(t)}{dt}\le0$ and $\frac{dL_{1}(t)}{dt}=0$ occurs at $E_{1}$. Hence  $\mathcal{M}_{1}=\{E_{1}\}$ is the largest invariant subset of $\Big\{(x(t),y(t),c(t),v(t),z(t)) \mid \frac{dL_{0}(t)}{dt}=0\Big\}$. By  LaSalle's invariance principle, we  deduce that if $\mathcal{R}_{1}\le1\le\mathcal{R}_{0}$, then $E_{1}$ is globally asymptotically stable.
	\end{proof}
	\begin{theorem}\label{theorem3}
		If $\mathcal{R}_{0}$, $\mathcal{R}_{1}>1$, $E_{2}$ is globally asymptotically stable.
	\end{theorem}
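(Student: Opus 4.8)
The plan is to run the same Lyapunov--LaSalle scheme used for $E_0$ and $E_1$, but centered at the interior equilibrium $E_2$; the only genuinely new ingredient is handling the saturated CTL proliferation $c\,yz/(h+z)$. First I would record the steady-state identities for $E_2$: from the five equations, $\lambda = d_1 x_2 + \beta_1 x_2 v_2 + \beta_2 x_2 c_2$, $A_1(\beta_1 x_2 v_2 + \beta_2 x_2 c_2) = (\alpha_1+d_2)y_2 + p y_2 z_2$, $d_3 c_2 = \alpha_2 y_2$, $d_4 v_2 = kA_2 y_2$, and crucially $c\,y_2 = d_5(h+z_2)$, which is available precisely because $z_2>0$ when $\mathcal{R}_1>1$. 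These will be substituted repeatedly to eliminate $\lambda$, $(\alpha_1+d_2)$, and $d_5$.

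Next I would introduce the delay functionals $N_{21},N_{22},N_{23}$ verbatim from the $N_{1i}$ of Theorem \ref{theorem2} with subscript $1$ replaced by $2$, so that each $\dot N_{2i}$ converts the distributed-delay integrals in (\ref{eq2}) into $g$-integrals of the mismatch ratios. The candidate functional is $L_2 = x_2 g(x/x_2) + \frac{1}{A_1} y_2 g(y/y_2) + \frac{\beta_2 x_2 c_2}{d_3} g(c/c_2) + \frac{\beta_1 x_2 v_2}{d_4} g(v/v_2) + Q(z) + \frac{1}{A_1} N_{21} + \frac{1}{A_1} N_{22} + N_{23}$, where the key departure from the two previous proofs is that the \emph{linear} $z$-term used for $E_0,E_1$ must be replaced by the nonlinear functional $Q(z) = \frac{p}{A_1 c}\int_{z_2}^{z}\frac{(h+\xi)(\xi-z_2)}{\xi}\,d\xi$. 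The weight $(h+\xi)/\xi$ is chosen so that $Q'(z) = \frac{p}{A_1 c}\frac{(h+z)(z-z_2)}{z}$ has $Q'(z_2)=0$ and $Q\ge 0$, and so that $Q'(z)\dot z$ generates exactly the factor needed to annihilate the $yz$ cross term coming from $-pyz$ in $\dot y$. A plain $g(z/z_2)$ term, which would suffice for bilinear proliferation, fails here: the saturation factor $y/(h+z)$ cannot match the bilinear killing $yz$ without the $(h+z)/z$ weight.

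Then I would differentiate $L_2$ along (\ref{eq2}) and insert the equilibrium identities. By the same bookkeeping as in Theorem \ref{theorem2}, the $x,y,c,v$ and delay contributions collapse into $d_1 x_2\bigl(2 - \frac{x_2}{x} - \frac{x}{x_2}\bigr)$ together with a finite collection of terms $-(\text{positive})\,g(\cdot)$ and $-(\text{positive})\int f_i e^{-m_i s} g(\cdot)\,ds$. For the CTL part I would use $c\,y_2=d_5(h+z_2)$ to write $\dot z = c z\bigl(\tfrac{y}{h+z} - \tfrac{y_2}{h+z_2}\bigr)$; combining $Q'(z)\dot z$ with the cross term $-\frac{p}{A_1}(y-y_2)z$ cancels the $yz$ term and, after simplifying the purely-$z$ remainder, yields exactly $-\frac{p\,y_2}{A_1}\,\frac{(z-z_2)^2}{h+z_2}\le 0$. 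The leftover linear piece $-\frac{pz_2}{A_1}(y-y_2)$ is then absorbed automatically: substituting $(\alpha_1+d_2)=[A_1(\beta_1 x_2 v_2+\beta_2 x_2 c_2)-p y_2 z_2]/y_2$ into the $\dot y$ processing produces a compensating $+\frac{pz_2}{A_1}(y-y_2)$, so no uncompensated linear $y$-term survives.

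Since $g\ge 0$ and $2 - \frac{x_2}{x} - \frac{x}{x_2}\le 0$, this gives $\dot L_2\le 0$, with equality forcing $x=x_2$, all delay ratios equal to $1$, and $z=z_2$; the largest invariant subset of $\{\dot L_2=0\}$ is then $\{E_2\}$, and LaSalle's invariance principle yields the global asymptotic stability of $E_2$. I expect the one real obstacle to be the construction and verification of $Q(z)$: pinning the weight $(h+\xi)/\xi$, the constant $\frac{p}{A_1 c}$, and checking the two cancellations (the $yz$ term and the residual linear $y$-term) is the only step that is not a direct transcription of the $E_1$ computation, while everything else follows the established template.
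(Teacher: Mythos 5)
Your proof is correct, and outside of the CTL variable it is the same computation as the paper's; but your treatment of the $z$-component is genuinely different, and the justification you give for it contains a false claim. The paper does use ``a plain $g(z/z_2)$ term'' --- its Lyapunov functional contains $\frac{py_2z_2}{d_5A_1}\,g\!\left(\frac{z(t)}{z_2}\right)$ --- and this does not fail: differentiating it and using $cy_2=d_5(h+z_2)$ gives $\frac{p(h+z_2)}{A_1(h+z)}(y-y_2)(z-z_2)-\frac{py_2}{A_1(h+z)}(z-z_2)^2$, and adding the cross term $-\frac{p}{A_1}(y-y_2)(z-z_2)$ coming from the $\dot y$ bookkeeping leaves $\frac{p}{A_1}(y-y_2)(z-z_2)\bigl(\frac{h+z_2}{h+z}-1\bigr)-\frac{py_2}{A_1(h+z)}(z-z_2)^2=-\frac{p\,y(t)}{A_1(h+z(t))}(z(t)-z_2)^2\le 0$, which is exactly the last term in the paper's displayed $\dot L_2$. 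So the $(h+\xi)/\xi$ weight is not \emph{necessary}: the imperfect cancellation self-corrects into a nonpositive square. That said, your functional $Q(z)=\frac{p}{A_1c}\int_{z_2}^{z}\frac{(h+\xi)(\xi-z_2)}{\xi}\,d\xi$ is a valid alternative; your identity $Q'(z)\dot z=\frac{p}{A_1}(y-y_2)(z-z_2)-\frac{pd_5}{A_1c}(z-z_2)^2$ is right, the cross term then cancels identically, and you end with the constant-coefficient square $-\frac{py_2}{A_1(h+z_2)}(z-z_2)^2$. What your choice buys is a marginally cleaner LaSalle step, since $\dot L_2=0$ forces $z=z_2$ outright, with no state-dependent factor $y/(h+z)$ whose vanishing at $y=0$ must be ruled out by the other terms; what the paper's choice buys is uniformity with Theorems \ref{theorem1} and \ref{theorem2} (the same $g$-machinery throughout, no new weight to invent). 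Both functionals blow up as $z\to 0^+$, so both need $z(t)>0$, which Theorem \ref{tho1} supplies; the remaining ingredients of your argument --- the equilibrium identities, the $N_{2i}$, and the compensating $+\frac{pz_2}{A_1}(y-y_2)$ produced by eliminating $\alpha_1+d_2$ --- coincide with the paper's computation.
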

	\begin{proof}
		We define
		\begin{equation}
			\begin{aligned}
				N_{21}&=\beta_{1}x_{2}v_{2}\int_{0}^{\infty}\int_{t-s}^{t}f_{1}(s)e^{-m_{1}s}\left(\frac{x(\eta)v(\eta)}{x_{2}v_{2}}-1-\ln{\frac{x(\eta)v(\eta)}{x_{2}v_{2}}}\right)d\eta ds,\\
				N_{22}&=\beta_{2}x_{2}c_{2}\int_{0}^{\infty}\int_{t-s}^{t}f_{1}(s)e^{-m_{1}s}\left(\frac{x(\eta)c(\eta)}{x_{2}c_{2}}-1-\ln{\frac{x(\eta)c(\eta)}{x_{2}c_{2}}}\right)d\eta ds,\\
				N_{23}&=\frac{k\beta_{1}x_{2}y_{2}}{d_{4}}\int_{0}^{\infty}\int_{t-s}^{t}f_{2}(s)e^{-m_{2}s}\left(\frac{y(\eta)}{y_{2}}-1-\ln{\frac{y(\eta)}{y_{2}}}\right)d\eta ds.
			\end{aligned}
		\end{equation}
		Differentiating $N_{2i} (i=1, 2, 3)$ with respect to $t$, we obtain that
		\begin{equation}
			\begin{aligned}
				\frac{dN_{21}}{dt}&=A_{1}\beta_{1}x(t)v(t)-\int_{0}^{\infty}e^{-m_{1}s}f_{1}(s)\beta_{1}x(t-s)v(t-s)ds\\
				&\enspace+\beta_{1}x_{2}v_{2}\int_{0}^{\infty}f_{1}(s)e^{-m_{1}s}\ln{\frac{x(t-s)v(t-s)}{x(t)v(t)}}ds,\\
				\frac{dN_{22}}{dt}&=A_{1}\beta_{2}x(t)c(t)-\int_{0}^{\infty}f_{1}(s)e^{-m_{1}s}\beta_{2}x(t-s)c(t-s)ds\\
				&\enspace+\beta_{2}x_{2}c_{2}\int_{0}^{\infty}f_{1}(s)e^{-m_{1}s}\ln{\frac{x(t-s)c(t-s)}{x(t)c(t)}}ds,\\
				\frac{dN_{23}}{dt}&=\frac{k\beta_{1}x_{2}y_{2}}{d_{4}}\int_{0}^{\infty}f_{2}(s)e^{-m_{2}s}\left(\frac{y(t)}{y_{2}}-\frac{y(t-s)}{y_{2}}+\ln{\frac{y(t-s)}{y(t)}}\right)ds.
			\end{aligned}
		\end{equation}
		Define a Lyapunov function as follows:
		\begin{equation}
			\begin{aligned}
				L_{2}(t)&=x_{2}g\left(\frac{x(t)}{x_{2}}\right)+\frac{1}{A_{1}}y_{2}g\left(\frac{y(t)}{y_{2}}\right)+\frac{\beta_{2}x_{2}c_{2}}{d_{3}}g\left(\frac{c(t)}{c_{2}}\right)+\frac{\beta_{1}x_{2}v_{2}}{d_{4}}g\left(\frac{v(t)}{v_{2}}\right)\\
				&\enspace+\frac{py_{2}}{d_{5}A_{1}}z_{2}g\left(\frac{z(t)}{z_{2}}\right)+\frac{1}{A_{1}}N_{21}+\frac{1}{A_{1}}N_{22}+N_{23}.
			\end{aligned}
		\end{equation}
		Note that 
		\begin{equation}
			\begin{aligned}
				&\lambda-d_{1}x_{2}-\beta_{1}x_{2}v_{2}-\beta_{2}x_{2}c_{2}=0,\\
				&\beta_{1}A_{1}x_{2}v_{2}+\beta_{2}A_{1}x_{2}c_{2}-(\alpha_{1}+d_{2})y_{2}-py_{2}z_{2}=0.
			\end{aligned}
		\end{equation}
		Calculating the derivative of $L_{2}(t)$, we obtain
		\begin{equation}
			\begin{aligned}
				\frac{dL_{2}(t)}{dt}&=d_{1}x_{2}\left(2-\frac{x_{2}}{x(t)}-\frac{x(t)}{x_{2}}\right)-(\beta_{1}x_{2}v_{2}+\beta_{2}x_{2}c_{2})g\left(\frac{x_{2}}{x(t)}\right)\\
				&\enspace-\frac{\beta_{1}x_{2}v_{2}}{A_{1}}\int_{0}^{\infty}f_{1}(s)e^{-m_{1}s}g\left(\frac{x(t-s)v(t-s)y_{1}}{x_{2}v_{2}y(t)}\right)ds\\
				&\enspace-\frac{\beta_{2}x_{2}c_{2}}{A_{1}}\int_{0}^{\infty}f_{1}(s)e^{-m_{1}s}g\left(\frac{x(t-s)c(t-s)y_{2}}{x_{2}c_{2}y(t)}\right)ds\\
				&\enspace-\frac{\beta_{1}x_{2}v_{2}}{A_{2}}\int_{0}^{\infty}f_{2}(s)e^{-m_{2}s}g\left(\frac{y(t-s)v_{2}}{v(t)y_{2}}\right)ds\\
				&\enspace-\beta_{2}x_{2}c_{2}g\left(\frac{c_{2}y(t)}{c(t)y_{2}}\right)-\frac{py(t)}{A_{1}(h+z(t))}(z(t)-z_{2})^{2}.
			\end{aligned}
		\end{equation}		
		Using the same method as in Theorem~\ref{theorem2}, and noting that $\mathcal{R}_{0}, \mathcal{R}_{1} > 1$, we have $\frac{dL_{2}(t)}{dt} \leq 0$. Furthermore, $\frac{dL_{2}(t)}{dt} = 0$ if and only if $x(t) = x_{2}$, $y(t) = y_{2}$, $c(t) = c_{2}$, $v(t) = v_{2}$, and $z(t) = z_{2}$. The set $\mathcal{M}_{2} = \{E_{2}\}$ represents the largest invariant subset of $\left\{(x(t), y(t), c(t), v(t), z(t)) \mid \frac{dL_{2}(t)}{dt} = 0\right\}$. Following LaSalle's invariance principle, it follows that if $\mathcal{R}_{0}, \mathcal{R}_{1} > 1$, then $E_{2}$ is globally asymptotically stable.

	\end{proof}
	\section{Numerical simulation}

	In this section, we investigate the global stability through numerical simulations. For simplicity, we reformulate the system given in equation~(\ref{eq2}) by introducing specific distribution functions as in  \cite{YANG2015183}:
	\begin{equation}
		\begin{aligned}
			f_{i}(s) = \delta(s - \tau_{i}^{\prime}), \quad i = 1, 2,
		\end{aligned}
	\end{equation}
	where $\delta(\cdot)$ represents the Dirac delta function.

	By exploiting the properties of the Dirac delta function, we derive the following expressions:
	\begin{equation}
		\begin{aligned}
			A_{i} = \int_{0}^{\infty} \delta(s - \tau_{i}^{\prime}) e^{-m_{i}s}ds& =  e^{-m_{i}\tau_{i}^{\prime}}, \quad i = 1, 2, \\
			\int_{0}^{\infty} \delta(s - \tau_{i}^{\prime}) e^{-m_{i}s} \varphi(t - s)ds &= e^{-m_{i}\tau_{i}^{\prime}} \varphi(t - \tau_{i}^{\prime}), \quad i = 1, 2.
		\end{aligned}
	\end{equation}
	
	With these expressions, the following values can be computed:
	\begin{equation}
		\begin{aligned}
			\mathcal{R}_{0} = \frac{\beta_{1} e^{-m_{1} \tau_{1}^{\prime}} k e^{-m_{2} \tau_{2}^{\prime}} d_{3} x_{0} + \beta_{2} e^{-m_{1} \tau_{1}^{\prime}} \alpha_{2} x_{0} d_{4}}{d_{3} (\alpha_{1} + d_{2})}, \;
			\mathcal{R}_{1} = \frac{c d_{1} d_{3} d_{4} (\mathcal{R}_{0} - 1)}{h d_{5} (\beta_{1} k e^{-m_{2} \tau_{2}^{\prime}} d_{3} + \beta_{2} \alpha_{2} d_{4})}.
		\end{aligned}
	\end{equation} 
	
	Appropriate parameters are selected and listed in Table~\ref{table}.

	\begin{table}[H]
		\begin{center}
			\caption{Parameters used in numberical simulation.}
			\small
			\begin{adjustbox}{height=1.0cm, width=13.5cm} 
				\begin{tabular}{ccccccccccccccccccc}\hline
					P & $\lambda$ & $\beta_{1}$ & $\beta_{2}$ & $d_{1}$ & $m_{1}$ & $\alpha_{1}$ & $d_{2}$ & $p$ & $\alpha_{2}$ & $d_{3}$ & $k$ & $m_{2}$ & $d_{4}$ & $c$ & $h$ & $d_{5}$ \\ \hline
					$E_{0}$ & 1 & 0.004 & 0.005 & 0.2 & 0.3  & 0.1 & 0.25 & 0.2 & 0.1 & 0.25 & 8 & 0.3 & 0.25 & 0.3 & 0.01 & 0.25 \\ 
					$E_{1}$ & 20 & 0.004 & 0.005 & 0.2 & 0.3 & 0.1 & 0.25 & 0.02 & 0.6 & 0.25 & 20 & 0.3 & 0.25 & 0.003 & 0.03 & 0.25\\
					$E_{2}$ & 20 & 0.004 & 0.005 & 0.2 & 0.3 & 0.1 & 0.25 & 0.02 & 0.6 & 0.25 & 20 & 0.3 & 0.25 & 0.03 & 0.03 & 0.25\\\hline
				\end{tabular}
				\label{table}
			\end{adjustbox}
		\end{center}
	\end{table}

	Under the parameter conditions specified in the table, we employ numerical simulations to discuss the results in Section 3.
	
	We initially assess the global stability of $E_0 = \left(\frac{\lambda}{d_1}, 0, 0, 0, 0\right)$. Different initial values and time delays are presented as follows:
	\begin{equation}
		\begin{aligned}
			&\Phi_1 = (5, 5, 6, 3, 3.5), \quad \Phi_2 = (6, 2, 7, 2, 4.5), \quad \Phi_3 = (4, 3, 8, 4, 4), \\
			&\text{lags1} = (5, 3), \quad \text{lags2} = (5, 2), \quad \text{lags3} = (2, 3).
		\end{aligned}
	\end{equation}
	The simulation results indicate that, regardless of differing time delays, the dynamics consistently converge to $E_0$, thereby validating the assertions of Theorem~\ref{theorem1}. Specifically, with the time delays \text{lags1} = (5, 3) and the initial values provided, the reproduction number $\mathcal{R}_0 = 0.2097 < 1$ is maintained. The corresponding simulation outcomes are depicted in Figure~\ref{Fig1}. It is observed that the global stability of $E_0 = \left(\frac{\lambda}{d_1}, 0, 0, 0, 0\right)$ does not depend on the initial values. From $\Phi_1 = (5, 5, 6, 3, 3.5)$ and the specified time delays, under the consistent condition that $\mathcal{R}_0 < 1$, further simulation results are shown in Figure~\ref{Fig2}.

	\begin{figure}[H]
		\centering
		\begin{minipage}[t]{1.0\linewidth}
			\centering
			\begin{tabular}{@{\extracolsep{\fill}}c@{}c@{}c@{}@{\extracolsep{\fill}}}
				\includegraphics[width=0.33\linewidth]{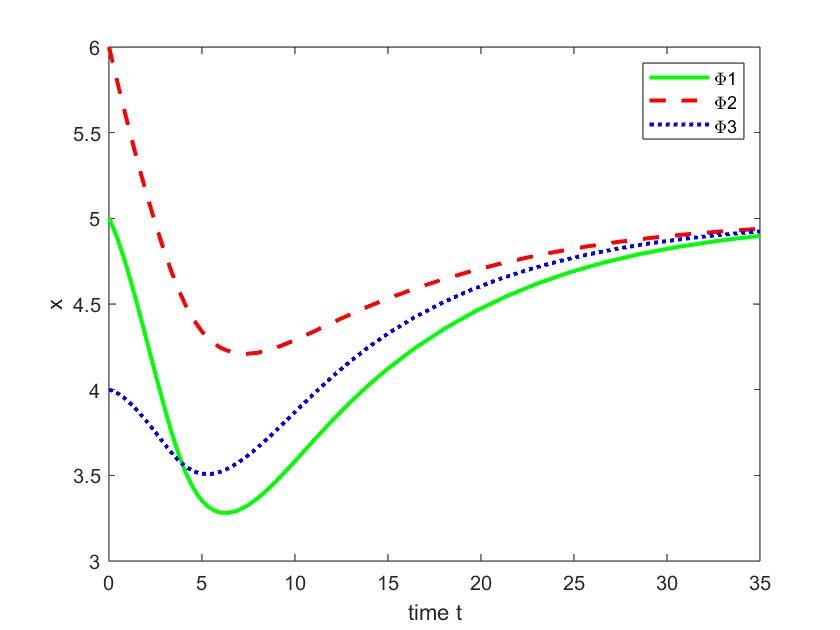} &
				\includegraphics[width=0.33\linewidth]{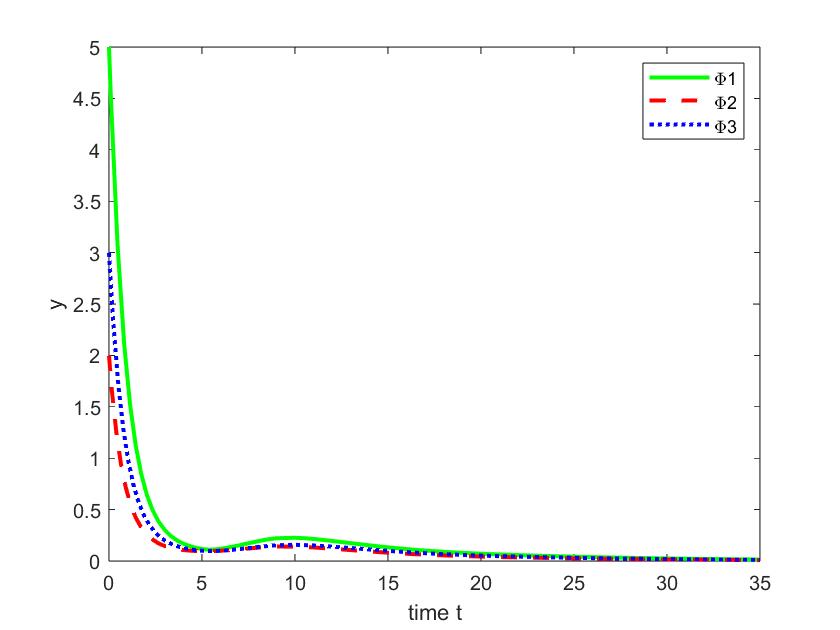}&
				\includegraphics[width=0.33\linewidth]{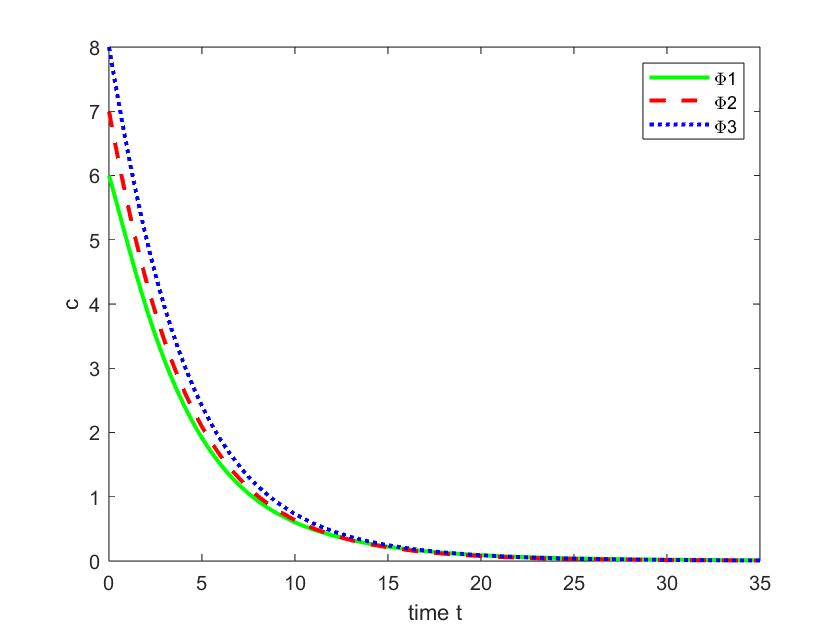}\\
			\end{tabular}
		\end{minipage}
		\begin{minipage}[t]{1.0\linewidth}
			\centering
			\begin{tabular}{@{\extracolsep{\fill}}c@{}c@{}@{\extracolsep{\fill}}}
				\includegraphics[width=0.33\linewidth]{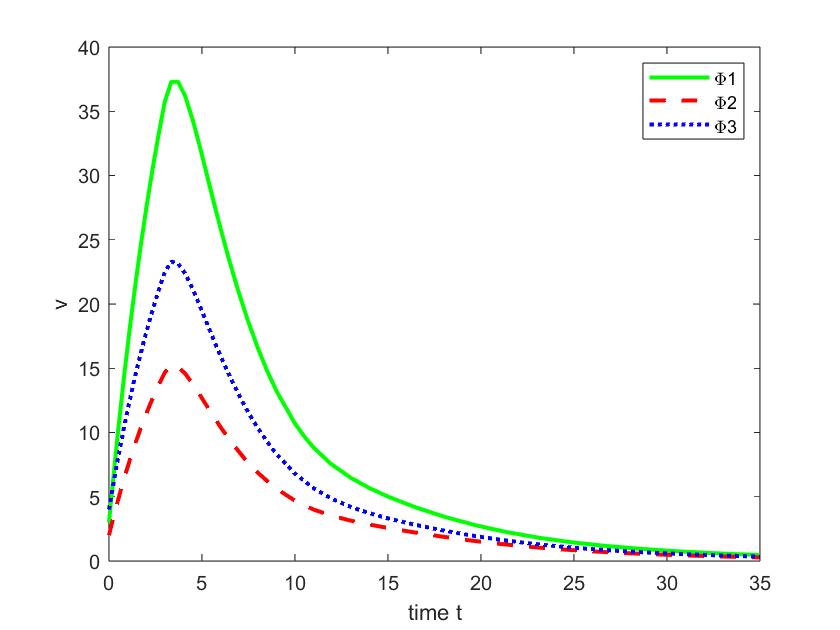} &
				\includegraphics[width=0.33\linewidth]{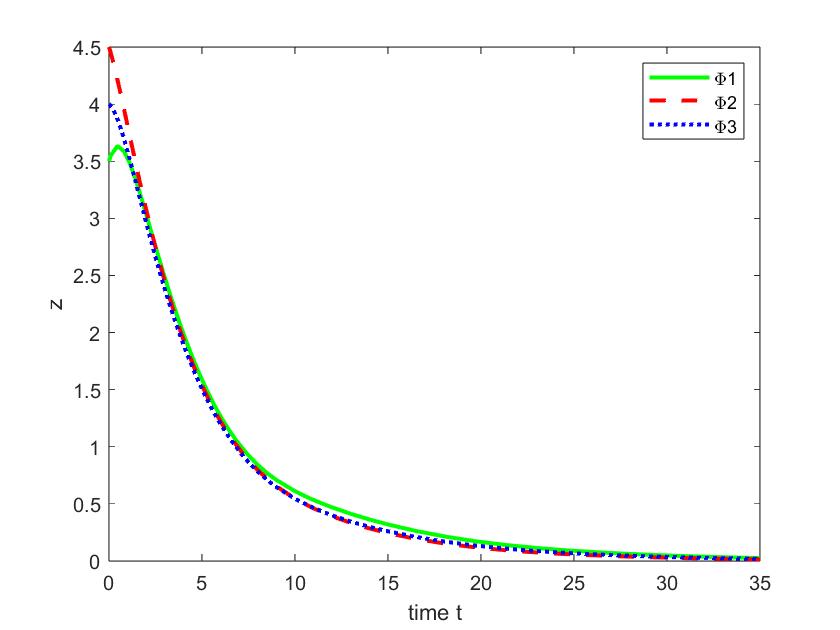}\\
			\end{tabular}
		\end{minipage}
		\caption{The simulation results of $E_{0}$ with different initial values}
		\label{Fig1}
	\end{figure}
	\begin{figure}[H]
		\centering
		\begin{minipage}[t]{1.0\linewidth}
			\centering
			\begin{tabular}{@{\extracolsep{\fill}}c@{}c@{}c@{}@{\extracolsep{\fill}}}
				\includegraphics[width=0.33\linewidth]{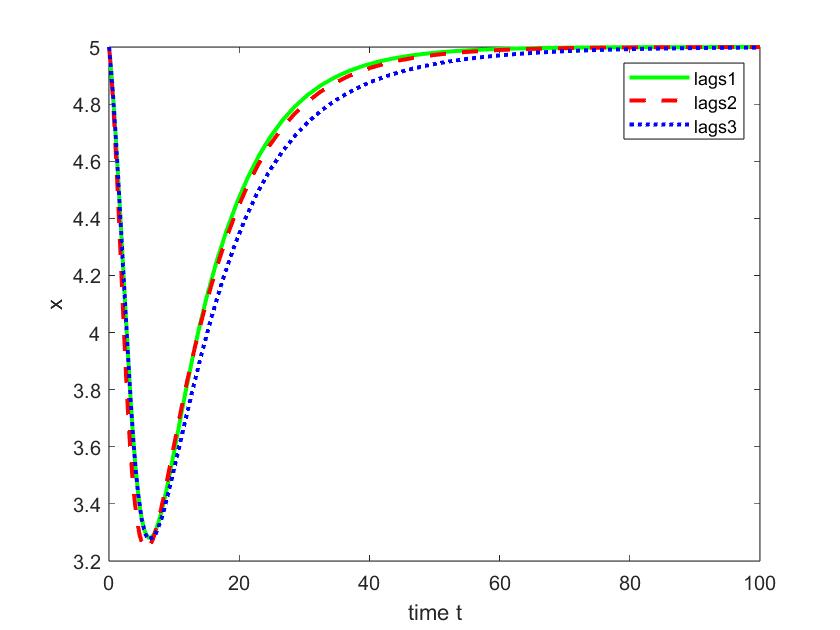} &
				\includegraphics[width=0.33\linewidth]{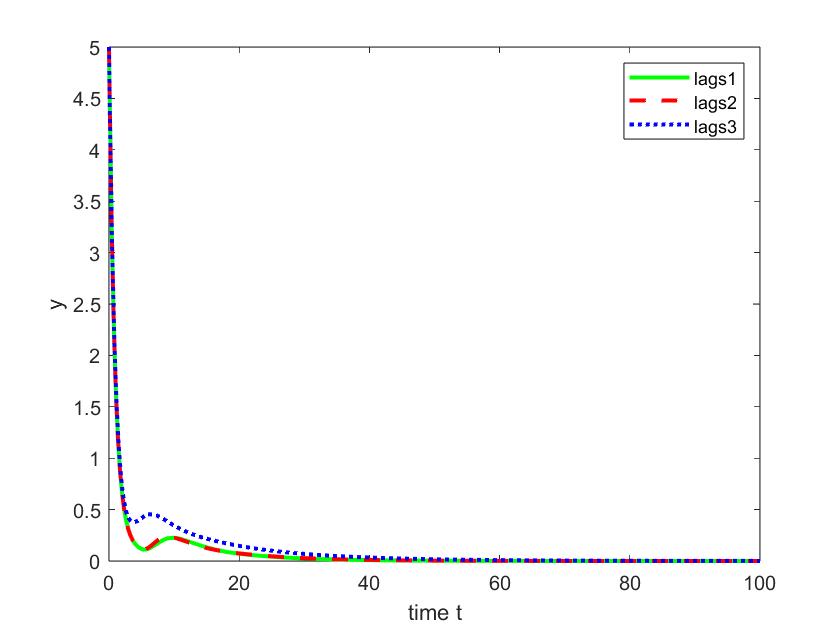}&
				\includegraphics[width=0.33\linewidth]{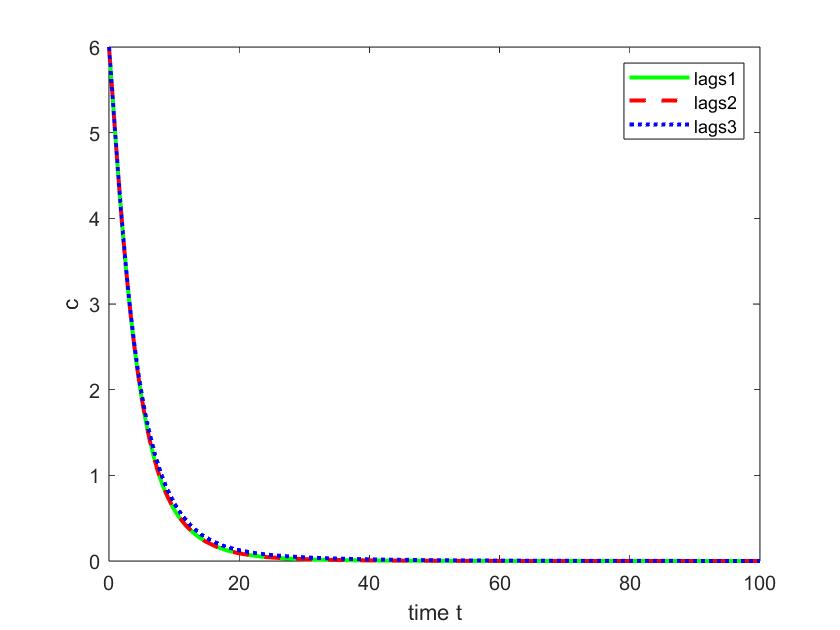}\\
			\end{tabular}
		\end{minipage}
		\begin{minipage}[t]{1.0\linewidth}
			\centering
			\begin{tabular}{@{\extracolsep{\fill}}c@{}c@{}@{\extracolsep{\fill}}}
				\includegraphics[width=0.33\linewidth]{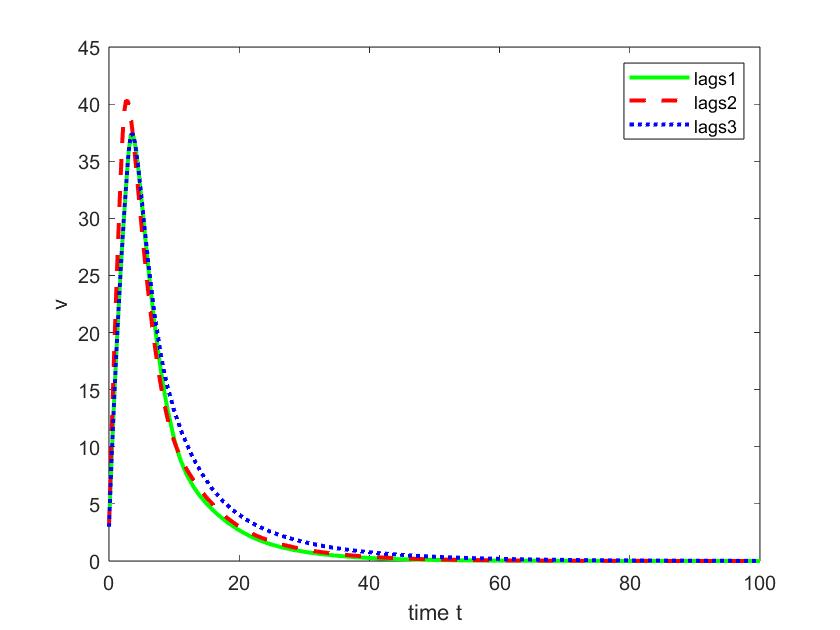} &
				\includegraphics[width=0.33\linewidth]{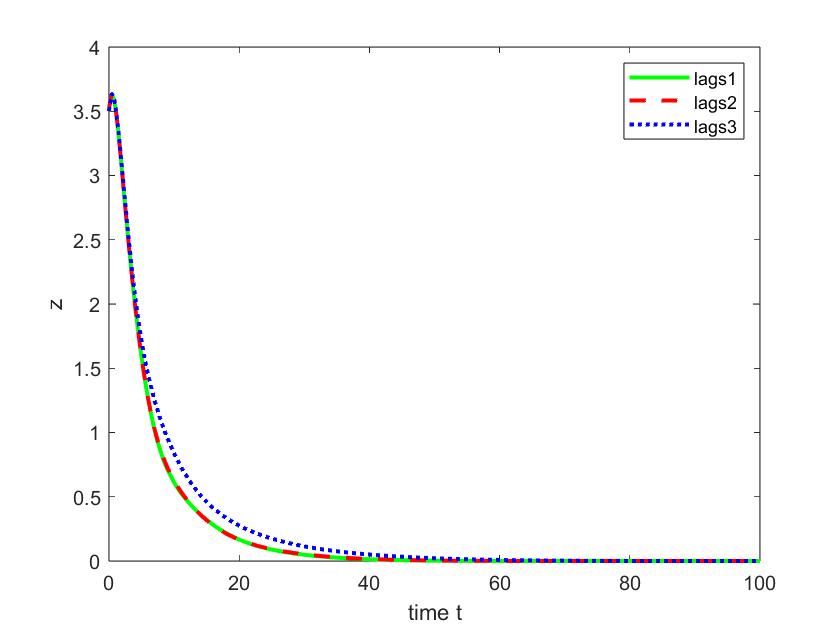}\\
			\end{tabular}
		\end{minipage}
		\caption{The simulation results of $E_{0}$ with different time delays}
		\label{Fig2}
	\end{figure}

	We examine the global stability of $E_1$. The various initial values and time delays are delineated as follows:
    \begin{equation}
	\begin{aligned}
		&\Phi_1 = (5, 5, 6, 3, 35), \quad \Phi_2 = (6, 2, 7, 2, 45), \quad \Phi_3 = (4, 3, 8, 4, 25), \\
		&\text{lags1} = (5, 3), \quad \text{lags2} = (5, 2), \quad \text{lags3} = (4,7).
	\end{aligned}
    \end{equation}
	In the scenario with \text{lags1} = (5, 3) and the specified initial values, the reproduction numbers $\mathcal{R}_0 = 2.2648 > 1$ and $\mathcal{R}_1 = 0.7121 < 1$ are maintained. The simulation outcomes are presented in Figure~\ref{Fig3}. It is evident that the global stability of $E_1$ is influenced by the initial values. Furthermore, with $\Phi_1 = (5, 5, 6, 3, 35)$ and the given time delays, the conditions $\mathcal{R}_0 > 1$ and $\mathcal{R}_1 < 1$ consistently hold. The corresponding simulation results are depicted in Figure~\ref{Fig4}.

	\begin{figure}[H]
		\centering
		\begin{minipage}[t]{1.0\linewidth}
			\centering
			\begin{tabular}{@{\extracolsep{\fill}}c@{}c@{}c@{}@{\extracolsep{\fill}}}
				\includegraphics[width=0.33\linewidth]{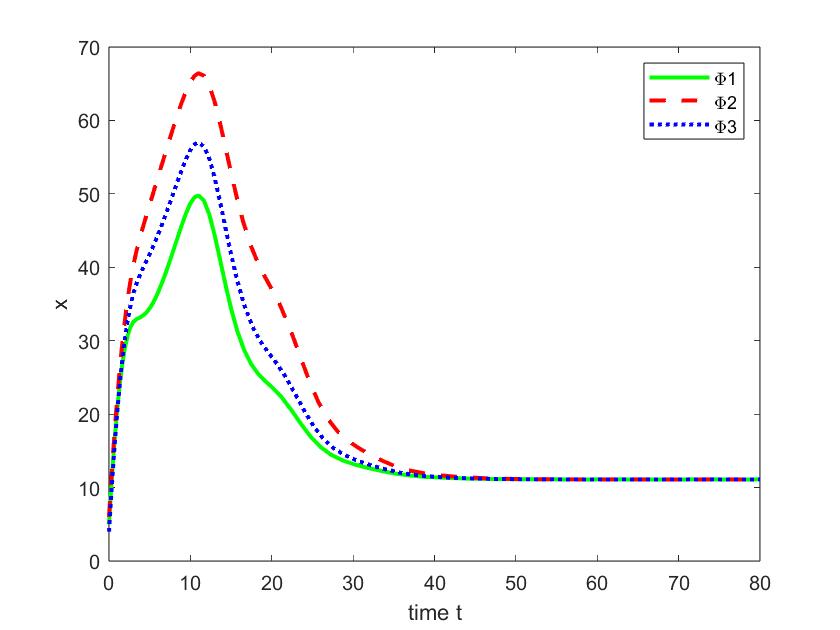} &
				\includegraphics[width=0.33\linewidth]{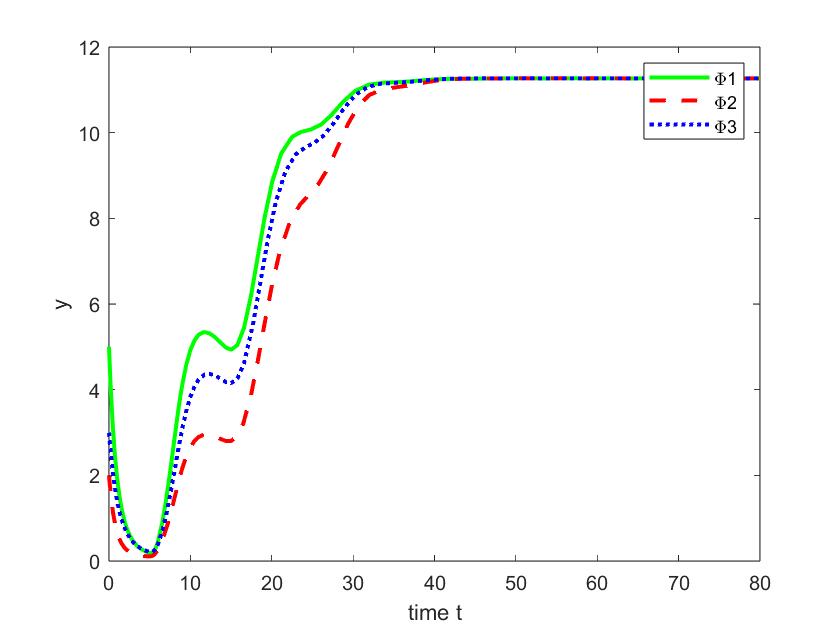}&
				\includegraphics[width=0.33\linewidth]{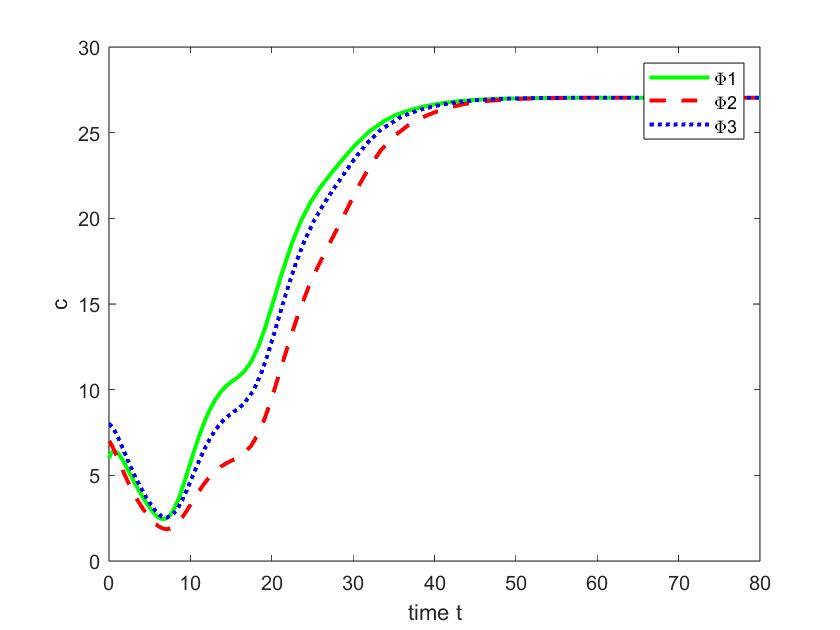}\\
			\end{tabular}
		\end{minipage}
		\begin{minipage}[t]{1.0\linewidth}
			\centering
			\begin{tabular}{@{\extracolsep{\fill}}c@{}c@{}@{\extracolsep{\fill}}}
				\includegraphics[width=0.33\linewidth]{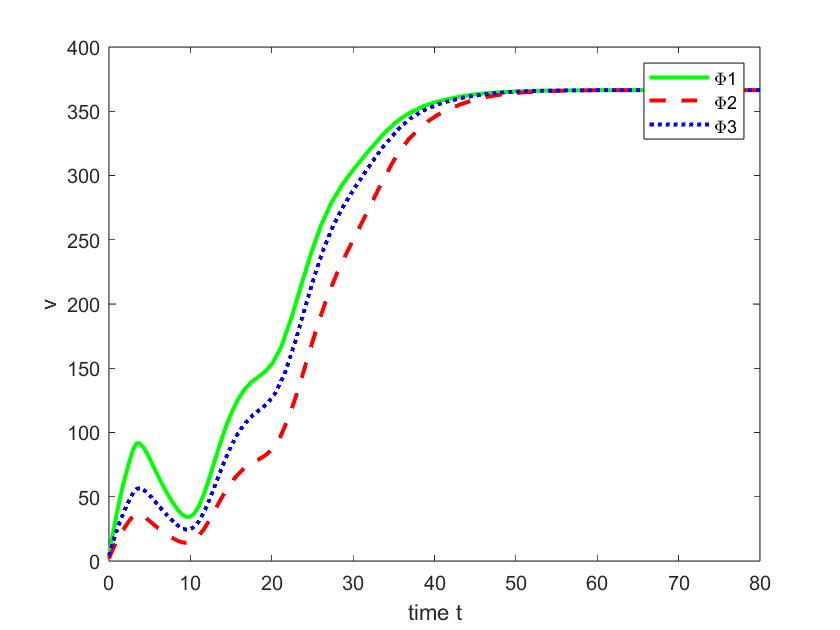} &
				\includegraphics[width=0.33\linewidth]{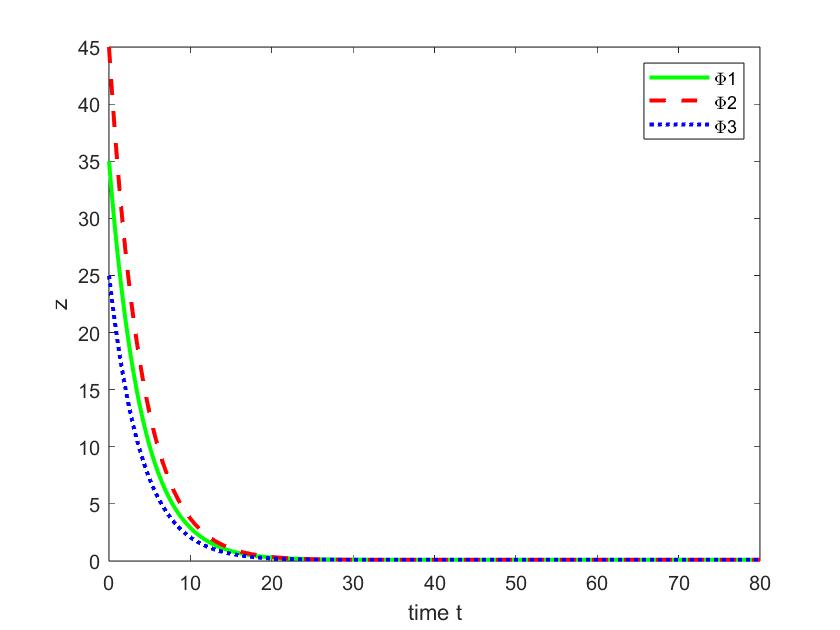}\\
			\end{tabular}
		\end{minipage}
		\caption{The simulation results of $E_{1}$ with different initial values}
		\label{Fig3}
	\end{figure}
	\begin{figure}[H]
		\centering
		\begin{minipage}[t]{1.0\linewidth}
			\centering
			\begin{tabular}{@{\extracolsep{\fill}}c@{}c@{}c@{}@{\extracolsep{\fill}}}
				\includegraphics[width=0.33\linewidth]{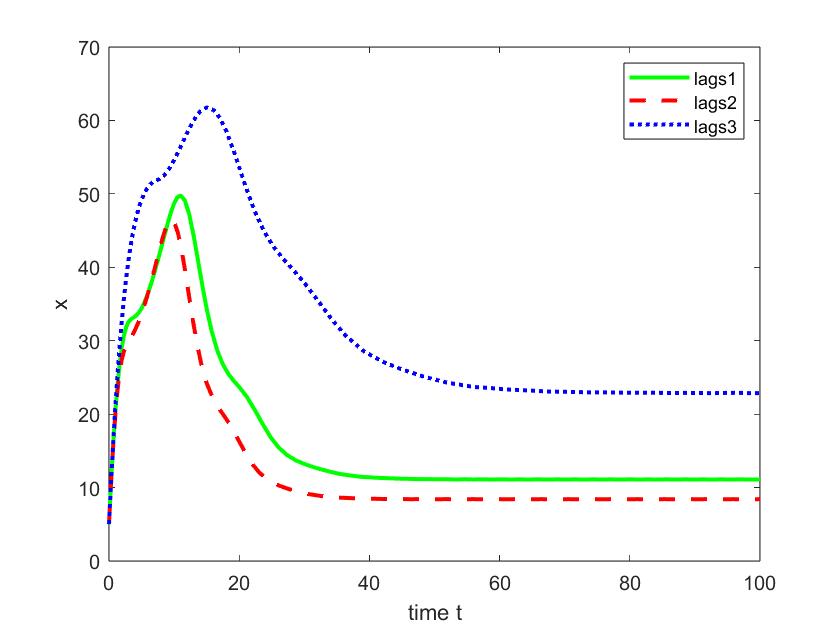} &
				
				\includegraphics[width=0.33\linewidth]{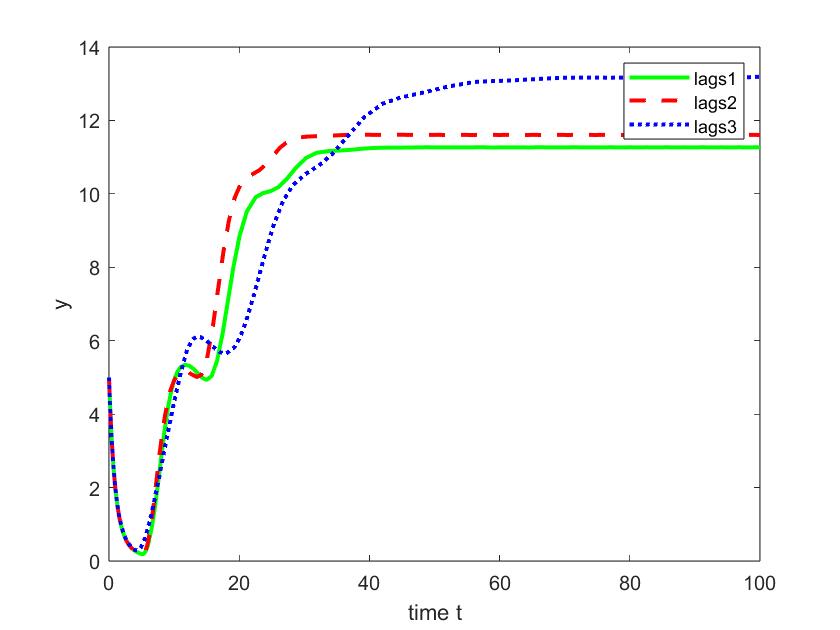}&
				\includegraphics[width=0.33\linewidth]{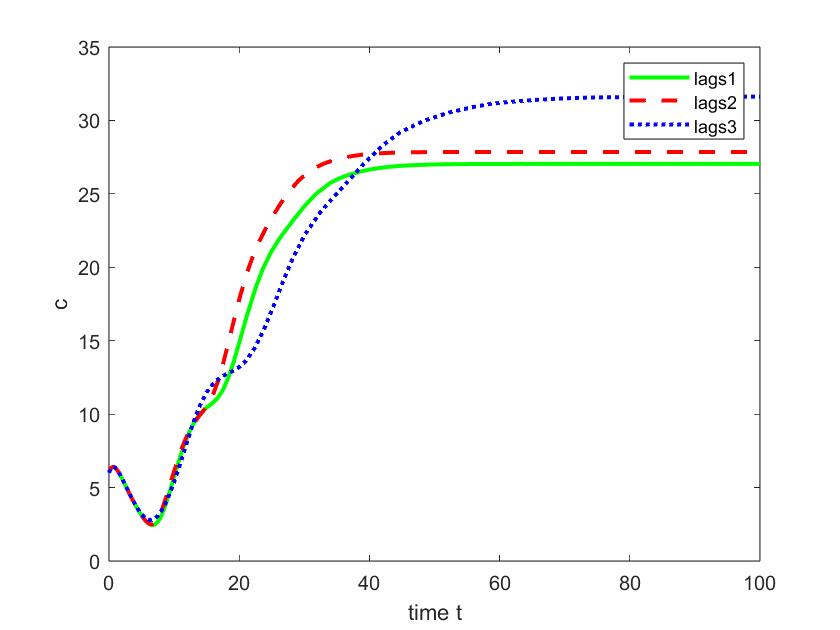}\\
			\end{tabular}
		\end{minipage}
		\begin{minipage}[t]{1.0\linewidth}
			\centering
			\begin{tabular}{@{\extracolsep{\fill}}c@{}c@{}@{\extracolsep{\fill}}}
				\includegraphics[width=0.33\linewidth]{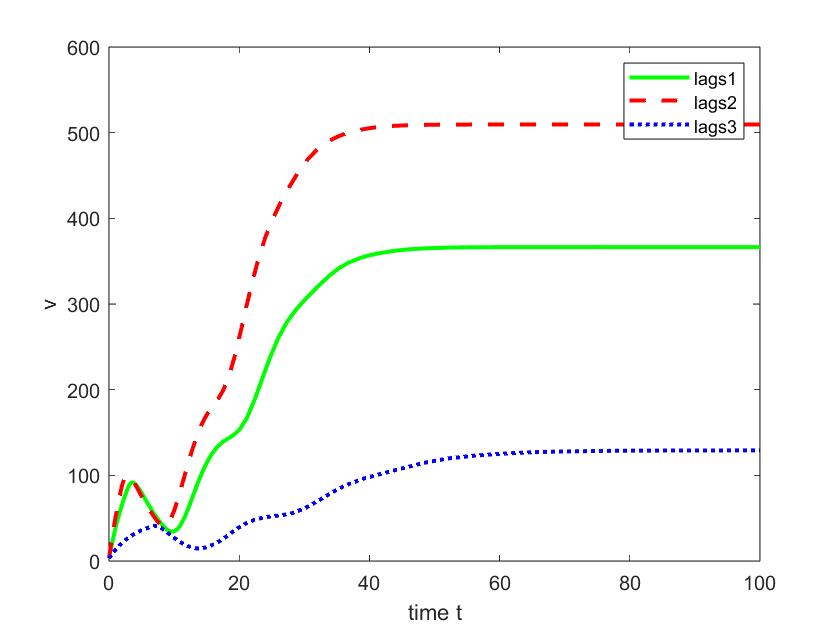} &
				\includegraphics[width=0.33\linewidth]{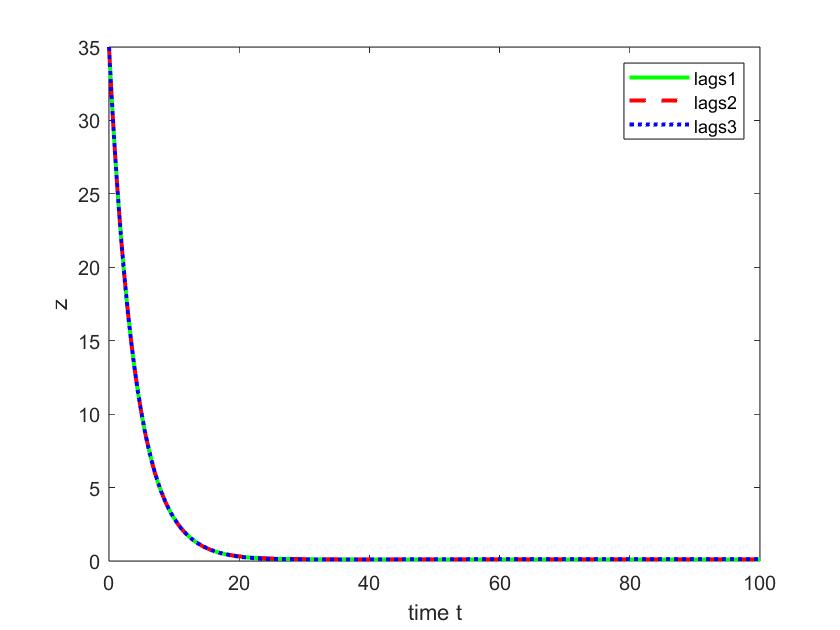}\\
			\end{tabular}
		\end{minipage}
		\caption{The simulation results of $E_{1}$ with different time delays}
		\label{Fig4}
	\end{figure}

	The simulation results corroborate Theorem~\ref{theorem2}. Additionally, it is observed that under varying time delays, the eventual infection state can differ even when the initial conditions are consistent. This variation highlights the significant role that time delay plays in determining the final infection status. Such dynamics reflect the differential impact of the virus on diverse populations, underscoring the biological complexities involved.

	Under the condition that $\tau_{1}^{\prime}, \tau_{2}^{\prime} \geq 0$, we discuss the global stability of $E_{2}$. The various initial values and time delays are specified as follows:
    \begin{equation}
	\begin{aligned}
		&\Phi_{1} = (12, 4, 35, 1, 10), \quad \Phi_{2} = (25, 3, 40, 2, 15), \quad \Phi_{3} = (40, 10, 25, 4, 13), \\
		&\text{lags1} = (5, 4), \quad \text{lags2} = (5, 2), \quad \text{lags3} = (2, 4).
	\end{aligned}
    \end{equation}
	For the case with \text{lags1} = (5, 4) and the given initial values, the reproduction numbers $\mathcal{R}_0 = 1.7274 > 1$ and $\mathcal{R}_1 = 5.3690 > 1$ are observed. The corresponding simulation outcomes are shown in Figure~\ref{Fig5}. It is evident that the global stability of $E_{2}$ does not depend on the initial values. With $\Phi_{1} = (12, 4, 35, 1, 10)$ and the specified time delays, the conditions $\mathcal{R}_0 > 1$ and $\mathcal{R}_1 > 1$ consistently hold. The results are displayed in Figure~\ref{Fig6}. The simulations suggest that larger values of $\tau_{1}^{\prime}$ and $\tau_{2}^{\prime}$ lead to a more favorable final infection state for the human body. Specifically, the larger the parameter $\tau_{1}^{\prime}$, the more beneficial the outcome. Therefore, using drugs to extend the time delays, particularly $\tau_{1}^{\prime}$, could potentially offer a more effective biological intervention.

	\begin{figure}[H]
		\centering
		\begin{minipage}[t]{1.0\linewidth}
			\centering
			\begin{tabular}{@{\extracolsep{\fill}}c@{}c@{}c@{}@{\extracolsep{\fill}}}
				\includegraphics[width=0.33\linewidth]{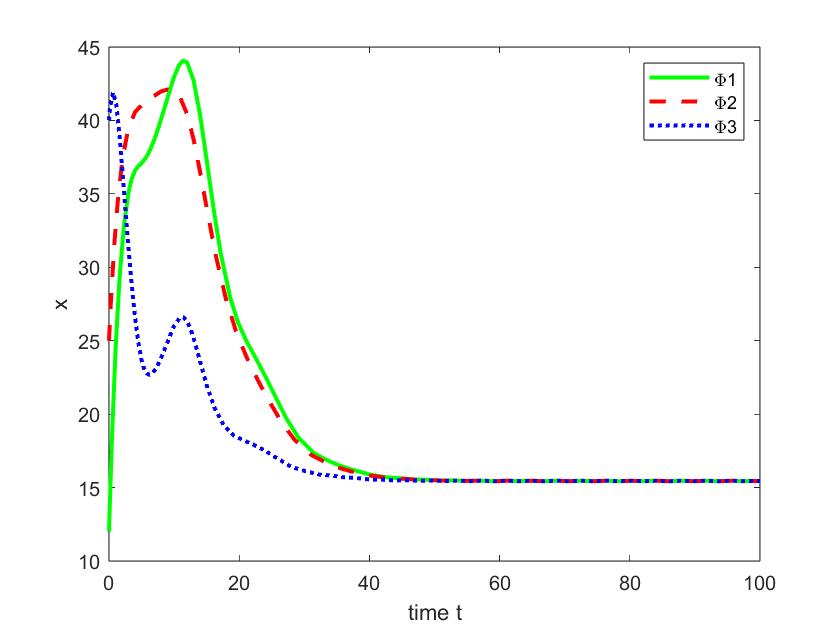} &
				\includegraphics[width=0.33\linewidth]{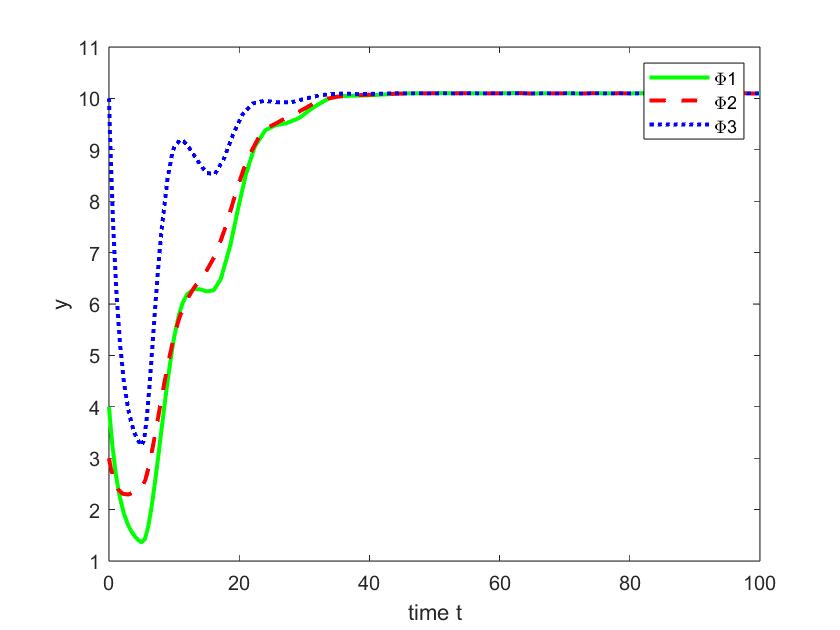}&
				\includegraphics[width=0.33\linewidth]{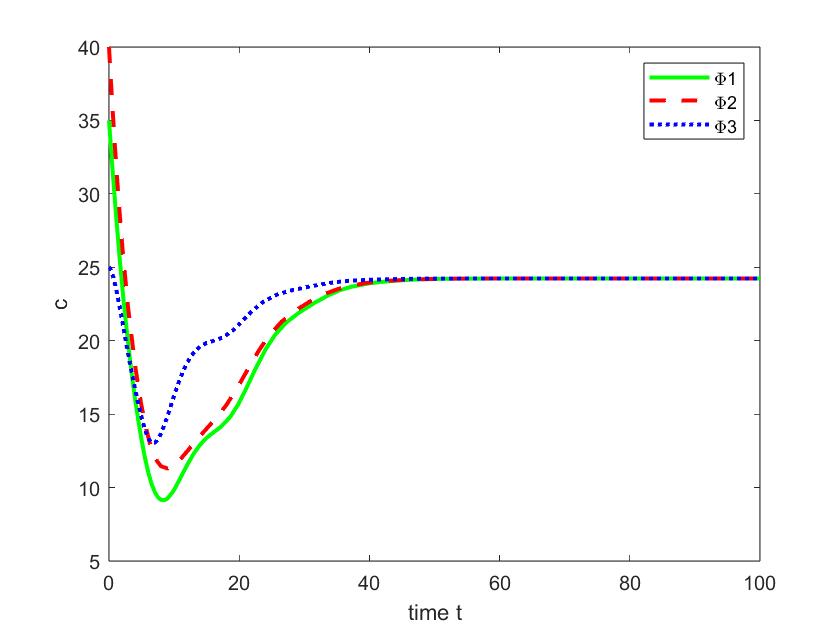}\\
			\end{tabular}
		\end{minipage}
		\begin{minipage}[t]{1.0\linewidth}
			\centering
			\begin{tabular}{@{\extracolsep{\fill}}c@{}c@{}@{\extracolsep{\fill}}}
				\includegraphics[width=0.33\linewidth]{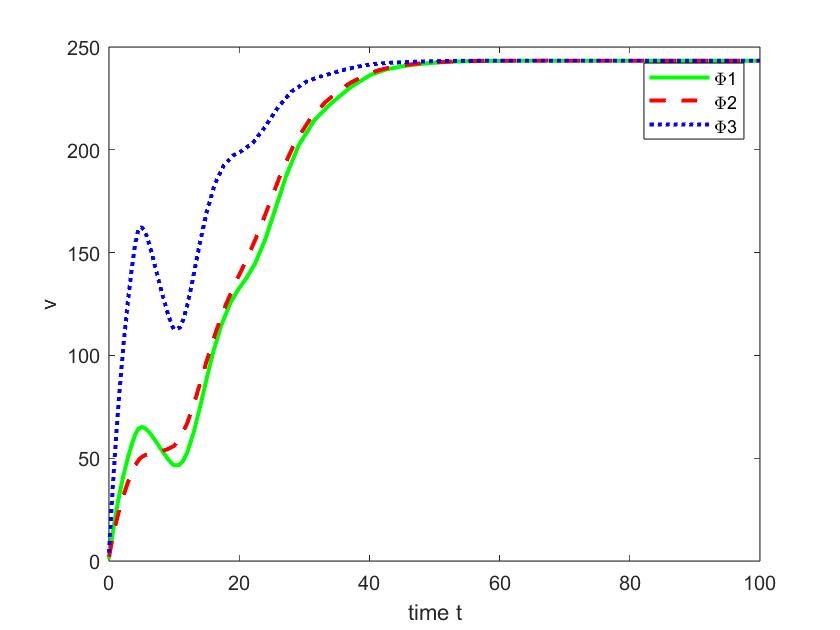} &
				\includegraphics[width=0.33\linewidth]{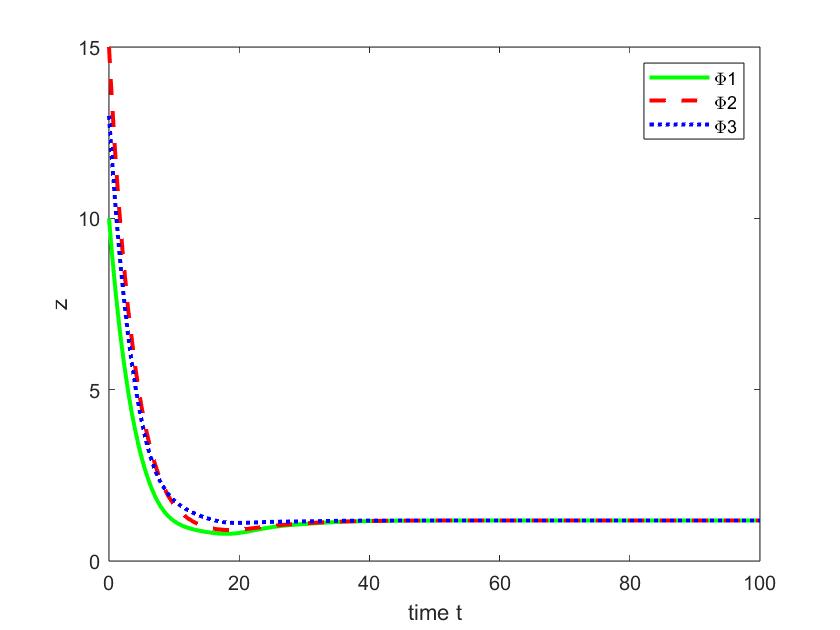}\\
			\end{tabular}
		\end{minipage}
		\caption{The simulation results of $E_{2}$ with different initial values}
		\label{Fig5}
	\end{figure}
	\begin{figure}[H]
		\centering
		\begin{minipage}[t]{1.0\linewidth}
			\centering
			\begin{tabular}{@{\extracolsep{\fill}}c@{}c@{}c@{}@{\extracolsep{\fill}}}
				\includegraphics[width=0.33\linewidth]{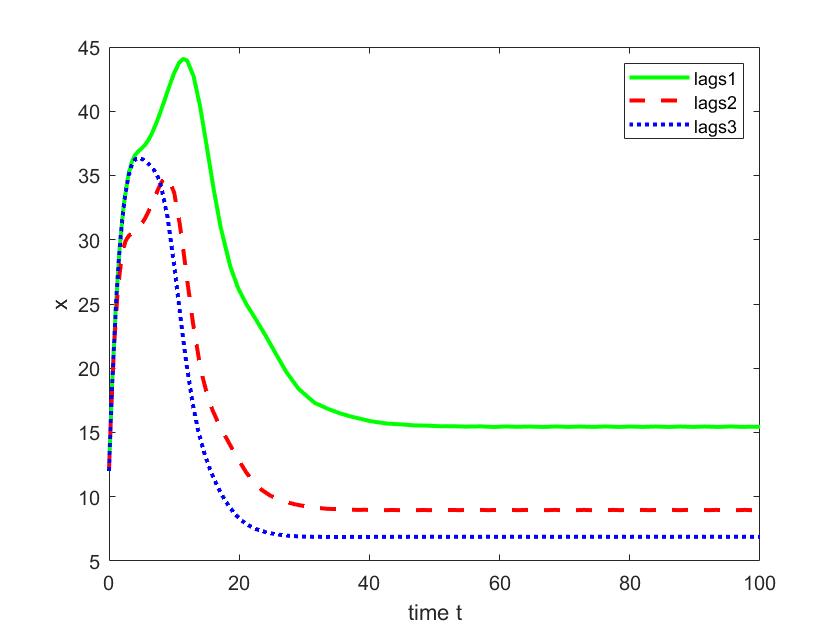} &
				\includegraphics[width=0.33\linewidth]{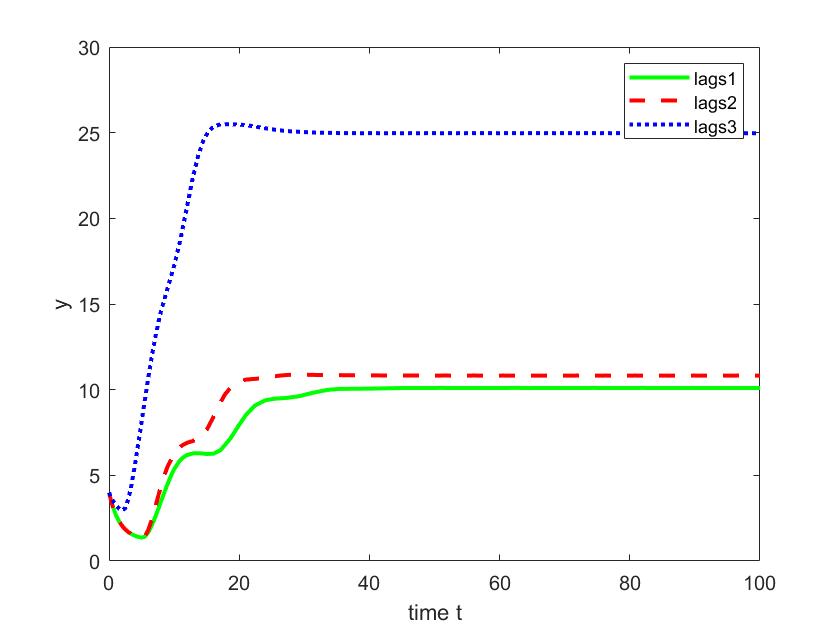}&
				\includegraphics[width=0.33\linewidth]{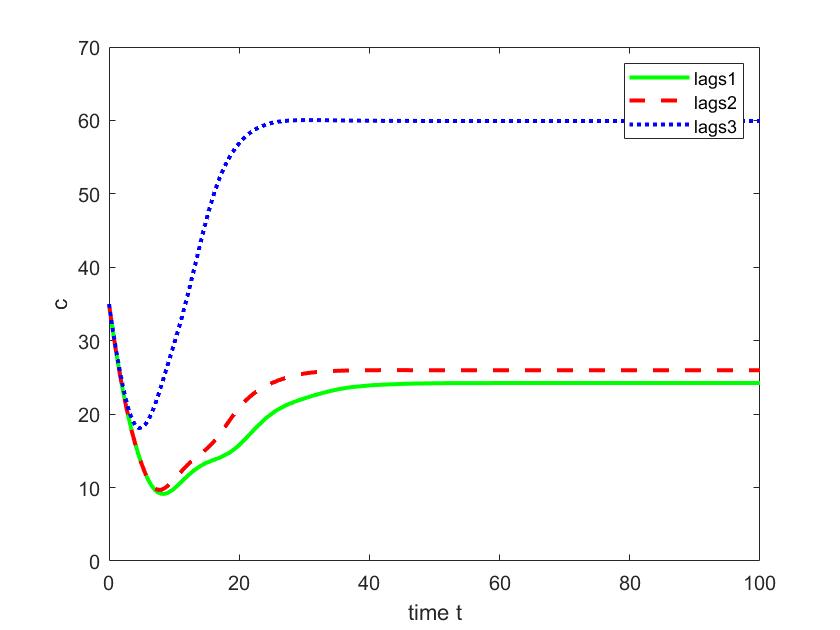}\\
			\end{tabular}
		\end{minipage}
		\begin{minipage}[t]{1.0\linewidth}
			\centering
			\begin{tabular}{@{\extracolsep{\fill}}c@{}c@{}@{\extracolsep{\fill}}}
				\includegraphics[width=0.33\linewidth]{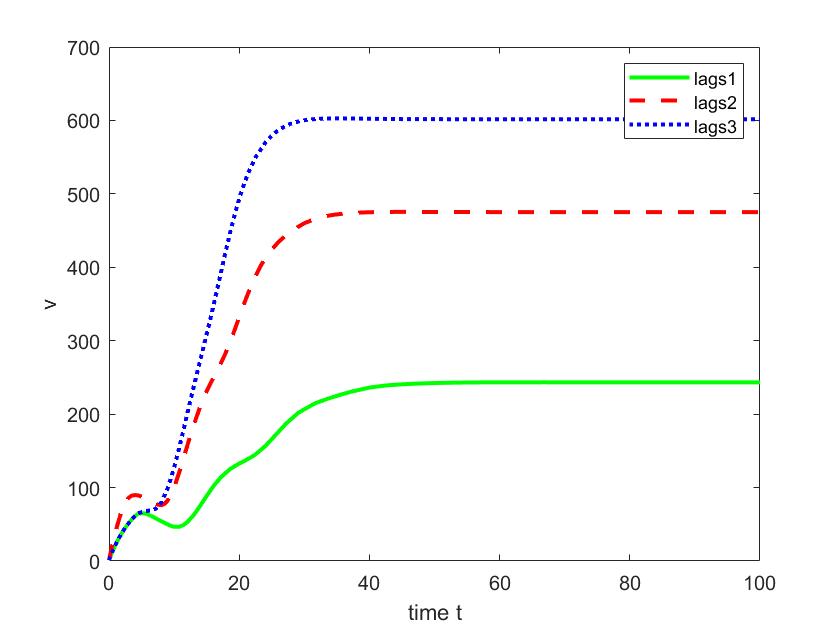} &
				\includegraphics[width=0.33\linewidth]{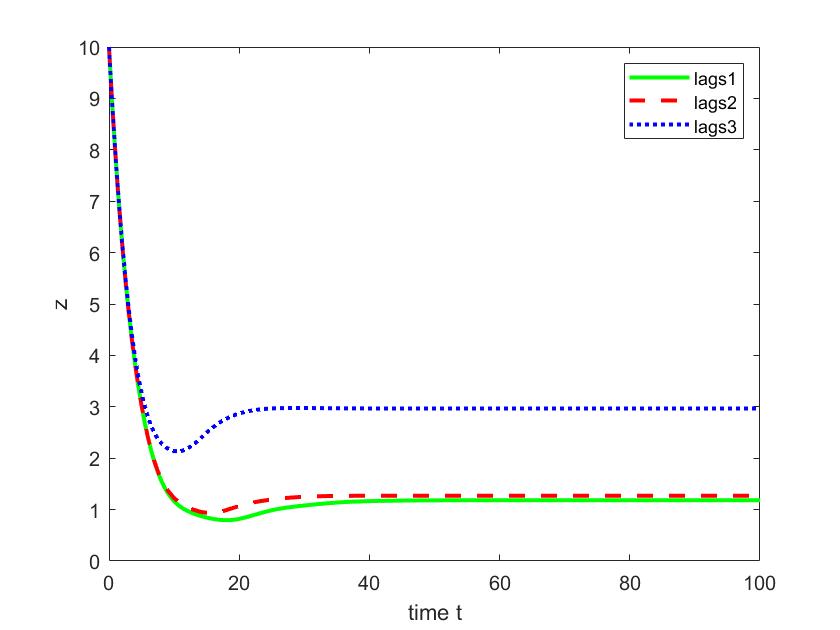}\\
			\end{tabular}
		\end{minipage}
		\caption{The simulation results of $E_{2}$ with different time delays}
		\label{Fig6}
	\end{figure}
	
	\section{Conclusion}

	In this paper, we have extended an HIV model with inflammatory cytokines by incorporating distributed delays and a saturated infection rate. We proceed to calculate the basic reproduction numbers and three equilibrium points: $E_0$, $E_1$, and $E_2$. We demonstrate that the convex cone  $K_5$ is invariant in relation to the system. Employing Lyapunov functions and LaSalle's invariance principle, we discuss the global stability of $E_1$, $E_2$, and $E_3$ under specific conditions. Our numerical simulations not only confirm the conclusions of the theorems but also suggest that the impact of virus infection can be mitigated by prolonging the time delays.

\vskip 20 pt
\noindent{\bf Acknowledgement}

The first author acknowledges the support from the Simons Foundation (\#585201).  The second author  is  partially  supported by  the National Key Rearch and Development Program of China 2020YFA0713100 and by the National Natural Science Foundation of China (Grant No. 11721101).

	\bibliographystyle{plain}
	\bibliography{k.bib}		
	
\end{document}